\newtheorem{theorem}{Theorem}
\newtheorem{corollary}[theorem]{Corollary}
\newtheorem{lemma}[theorem]{Lemma}
\newtheorem{remark}[theorem]{Remark}
\newcommand{\E}{\mathbf{E}}
\newcommand{\Pp}{\mathbf{P}}
\newcommand{\R}{\mathbb{R}}
\newcommand{\N}{\mathbb{N}}
\newcommand{\ONE}{{\bf 1}}
\newcommand{\Fc}{\mathscr{F}}
\newcommand{\xcmd}[2]{\left[#1\,\left\vert\vphantom{#1#2}\right. #2\right]}
\newcommand{\xbr}[1]{\left\{#1\right\}}
\newcommand{\parbar}[1]{\left( #1 \right)}
\newcommand{\Hspace}[2]{\mathscr{H}^{#1}_S ( #2 ) }
\newcommand{\be}{\begin{equation}}
\newcommand{\ee}{\end{equation}}
\newcommand{\beq}{\begin{eqnarray*}}
\newcommand{\eeq}{\end{eqnarray*}}
\newcommand{\beqn}{\begin{eqnarray}}
\newcommand{\eeqn}{\end{eqnarray}}
\newcommand{\ba}{\begin{aligned}}
\newcommand{\ea}{\end{aligned}}
\newcommand{\bes}{\begin{equation*}}
\newcommand{\ees}{\end{equation*}}
\newcommand{\Om}{\Omega}
\newcommand{\clf}{\mathcal{F}}
\newcommand{\clg}{\mathcal{G}}
\newcommand{\cln}{\mathcal{N}}
\newcommand{\Hh}{\mathbf{H}}
\newcommand{\cla}{\mathcal{A}}
\newcommand{\clh}{\mathcal{H}}
\newcommand{\zz}{\zeta}
\begin{document}

\title[Forward-Backward SDE and KPZ Equation] 
{Infinite Dimensional Forward-Backward Stochastic Differential Equations and
the KPZ Equation.}

%
\author{Sergio A. Almada Monter}
\address[Sergio A. Almada Monter and Amarjit Budhiraja]{Department of Statistics and Operations Research\\
       University of North Carolina\\
        Chapel Hill, NC 27599}
\email[Sergio A. Almada]{salmada3@email.unc.edu}
\author{Amarjit Budhiraja}
\email[Amarjit Budhiraja]{budhiraj@email.unc.edu}
\thanks{Research supported in part by  the National Science Foundation (DMS-1004418, DMS-1016441),
the Army Research Office (W911NF-10-1-0158) and the US-Israel
Binational Science Foundation (Grant 2008466).}

\date{July 20, 2012}

\subjclass{Primary 05C38, 15A15; Secondary 05A15, 15A18}

\maketitle

\begin{abstract}
Kardar-Parisi-Zhang (KPZ) equation is a quasilinear stochastic partial differential equation(SPDE) driven by a space-time white noise.
In recent years there have been several works directed towards giving a rigorous meaning to a solution of this equation.
Bertini, Cancrini and Giacomin \cite{BertiniCancrini, BertiniGiacomin} have proposed a notion of a solution through a limiting procedure and a certain renormalization of the nonlinearity.  In this work we study connections between the KPZ equation and certain infinite dimensional
forward-backward stochastic differential equations.  Forward-backward equations with a finite dimensional noise have been studied extensively, mainly motivated by problems in mathematical finance.  Equations considered here differ from the classical works in that, in addition to having an infinite dimensional
driving noise, the associated SPDE involves a non-Lipschitz (namely a quadratic) function of the gradient.  Existence and uniqueness of solutions of
such infinite dimensional forward-backward equations is established and the terminal values of the solutions are then used to give a new probabilistic 
representation for the solution of the KPZ equation. \\

\noindent {\bf Keywords.}  Kardar-Parisi-Zhang equation, infinite dimensional noise, Backward stochastic differential equations, nonlinear stochastic partial differential equations, probabilistic representations.
\end{abstract}

\section{Introduction}
In \cite{PengPardoux} probabilistic representations for solutions of
certain quasilinear stochastic partial differential equations(SPDE) in terms of
finite dimensional forward-backward  stochastic differential
equations have been studied. The driving noise in the SPDE of~\cite{PengPardoux} is a finite dimensional Brownian motion.
  The paper shows that such  representations can be used to prove
existence and uniqueness results for the associated quasilinear
equation.  In recent years there have been many works that have established similar probabilistic representations for more general partial differential equations;
 see for example ~\cite{Nizar2BSDE} for such results for fully nonlinear equations.  
 Backward stochastic differential equations have a long history of applications  in financial mathematics; see~\cite{JinBook}  for  a survey of the field;  also see~\cite{MaInsurance} or~\cite{PowerPlants} for some modern applications.   They have also motivated numerical methods for nonlinear partial differential equations; see for example~\cite{TouziNumerical} or~\cite{DynamicProgramming}.

In this work we are interested in a
quasilinear SPDE driven by a space-time white noise of the following form
\be \label{eq:kpz}
\partial_t h(t,x) = \partial^2_x h(t,x) /2 - ( \partial_x h(t,x) )^2 /2 +
\mathscr{W}_t(x), \; t \in [0, T],\; x \in \R .
\ee
Here $\mathscr{W}$ is the formal white noise field:
\[
\E \mathscr{W}_t (x) \mathscr{W}_s (y) = \delta (t-s) \delta(x-y),
\]
where $\delta$ is the Dirac delta function.  The above equation,
known as the Kardar-Parisi-Zhang (KPZ) equation has been proposed in
\cite{KPZ} to describe the long scale behavior of interface
fluctuations in certain random polymer growth models.  The solution
$h(t,x) \equiv h_t(x)$ represents the height of the interface at
time $t$ and location $x$. The equation \eqref{eq:kpz} is ill posed in that, due to lack of spatial regularity of the noise, differentiable solutions do not exist and the nonlinear term on the right side of the
equation does not allow a weak sense  formulation of a solution.  In \cite{BertiniCancrini} and \cite{BertiniGiacomin} an interpretation of a solution of \eqref{eq:kpz} is proposed through a limiting procedure and a certain
`Wick renormalization' of the nonlinear term.  The Bertini-Cancrini-Giacomin (BCG) solution of \eqref{eq:kpz} was shown in \cite{BertiniGiacomin} to arise as a scaling limit of the fluctuation field for a microscopic interface model
known as the `weakly asymmetric single step solid on solid process'. 
In recent years there have been several interesting papers that have studied scaling limits for similar particle models characterizing them through  equations of the form 
\eqref{eq:kpz}; see for example ~\cite{Corwin1+1},~\cite{Jara}, ~\cite{CorwinSurvey} and references therein. In a different direction, a recent paper ~\cite{Hairer} has  proposed a notion of a solution of~\eqref{eq:kpz} using rough path theory.

In this work we give a probabilistic representation of the BCG solution of \eqref{eq:kpz} through solutions of certain infinite dimensional forward-backward  stochastic differential equations.
These equations extend the models studied in \cite{PengPardoux} to a setting where the driving noise is infinite dimensional.  Another significant difference from the setting considered in \cite{PengPardoux} is that the equation \eqref{eq:kpz}
involves a quadratic function of the gradient while \cite{PengPardoux} considers the case of a Lipschitz non-linearity. Quadratic non-linearity
has been studied by several authors in the setting of finite dimensional backward stochastic differential equations; see for example  \cite{Kobylanski},~\cite{Tevzadze}. 
However, none of these works treat equations involving both forward and backward stochastic integrals or the setting of an infinite dimensional noise.

A precise description of the representation obtained in this work requires some mathematical notation and background, which is given in Section \ref{two}, however below we proceed formally in order to describe the basic idea. Denote by $G_t$ the standard heat kernel on $\R \times \R$.  Then a solution of \eqref{eq:kpz} can formally be expressed as
\begin{equation} \label{eqn: Intuitive}
h(S,x) = G_S \star h_0 (x) + \int_0^S G_{S-r} \star \parbar{ \partial_x h (r,\cdot)  }^2 (x) dr + \int_{\R \times [0,S]} G_{S-r}(x,y)   \mathscr{W} ( dy , dr),
\end{equation}
for $S \in [0, T]$ and $x \in \R$, and where $\star$ denotes the  convolution in space and $h_0(x) = h(0,x)$ is the initial condition for \eqref{eq:kpz}. Fix  $S \in [0, T]$ and $x \in \R$. Let $W$ be a standard Brownian Motion independent of $\mathscr{W}$  and $X_r^S(x)=x + W(S) - W(r)$, for $r \in [0,S]$ . Then one can rewrite the expression in the above display as
\begin{equation} \label{eqn: Intuitive2}
h(S,x)= \E \xcmd{ h_0 (X_0^S (x) ) + \int_0^S  \parbar{ \partial_x h (r, X_r^S(x))  }^2 dr +  \int_0^S  \mathscr{W} ( X_r^S(x) , dr)  }{\Fc^\mathscr{W}},
\end{equation} 
where $\Fc^\mathscr{W}$ denotes the $\sigma$-field generated by $\mathscr{W}$.  The stochastic integral on the right side above is of course entirely formal (as is much of this description).
Define stochastic processes 
\[
z_S (r,x) = \partial_x h (r, X_r^S (x)  ), \; y_S (r,x) = h(r, X_r^S (x) ),\; r \in [0, S].
\]
Note that the values of these processes at time  $r$  depend on the past values (i.e. values over $[0,r]$) of $\mathscr{W}$ and the future increments (those over $[r,S]$)
of $W$. Also note that $y_S(S,x) = h(S, x)$.
Let, for $0 \le r \le S$, $\Fc^{\mathscr{W}}_r \vee \Fc^W_{r,S}$ be the $\sigma$ -field generated by $\{\mathscr{W}(s,x), s\le r, x \in \R\}$
and $\{W(S)-W(u), 0 \le r \le u \le S\}$.  Then~\eqref{eqn: Intuitive2} can be written as 
\begin{align*}
h(S,x)=& y_S(S,x)\\
 =& \E \xcmd{  h_0 (X_0^S (x) ) + \int_0^S z_S (r,x)^2 dr +  \int_0^S  \mathscr{W} ( X_r^S(x) , dr)  }{\Fc^{\mathscr{W}}_S \vee \Fc^W_{S,S} }.
\end{align*}
The above formula suggests an evolution equation for $y_S(u,x)$, $0 \le u \le S$ of the following form
\begin{equation}\label{eq:1020}
y_S(u,x) =  h_0 (X_0^S (x) ) + \int_0^u z_S (r,x)^2 dr +  \int_0^u \mathscr{W} ( X_r^S(x) , dr)  + M_S(u,x), \; 0 \le u \le S, 
\end{equation}
where the process  $M_S (u,x)$, $u \in [0,S]$, is such that  
\[
\E \xcmd{ M_S (u,x) } {\Fc^{\mathscr{W}}_u \vee \Fc^W_{u,S}}=0,
\] 
Thus if one can make \eqref{eq:1020} rigorous, one can then obtain a solution $h(S,x)$ of \eqref{eq:kpz} by evaluating  the solution
of \eqref{eq:1020}  at $u=S$.  The goal of this work is to show that after a suitable mollification of the infinite dimensional noise, the above equation
can indeed be interpreted in a rigorous manner; that there is a unique pair of processes $(z_S(\cdot,x), y_S(\cdot,x))$ (in a suitable class) that satisfy the equation; and that
the BGC solution of \eqref{eq:kpz} can be obtained by evaluating the solution of this `mollified equation' at $u=S$ and taking limit of the parameter of
mollification.

In the next section, we give a precise formulation and present our main results.

\section{Mathematical Preliminaries and Main Results.}  
\label{two}

In order to state our precise representation for the BCG solution of
\eqref{eq:kpz} we need some notation.  Let $\Hh = L^2(\mathbb{R},
dx)$, i.e. the Hilbert space of square integrable (with respect to
the Lebesgue measure) functions on the real line.  We will denote
the inner product and the norm on $\Hh$ by $\langle \cdot, \cdot
\rangle$ and $\| \cdot \|$, respectively. Let $(\Om, \clf, \Pp)$ be
a complete probability space on which is given a collection of
continuous real stochastic processes $\{B_t(h); t \ge 0\}_{h \in
\Hh}$ that defines a cylindrical Brownian motion (c.B.m) on $\Hh$.
Namely,
\begin{itemize}
\item $B_t(0) = 0$ and for each nonzero $h \in \Hh$,  $B_t(h) \langle h, h\rangle^{-1/2}$ is a one dimensional standard Wiener process.
\item For each $h \in \Hh$, $\{B_t(h)\}_{t\ge 0}$ is a $\clf_t^B$ martingale, where $\clf_t^B = \sigma \{B_s(v): s \le t, v \in \Hh\}\vee
\cln$ and $\cln$ is the collection of all $\Pp$ null sets.
\end{itemize}
Next, following \cite{BertiniGiacomin}, we consider a regularized
version of \eqref{eq:kpz}.  Let $\zz \in C_0^{\infty}(\mathbb{R})$ [
space of smooth functions on the real line with compact support] be
a nonnegative even function such that $\int_{\R} \zz(x) dx = 1$. For
$k \in \N$, let $\zz^k(y) = k \zz(ky)$, $y \in \R$.  For $x \in \R$,
define $\zz^k_x \in C_0^{\infty}(\mathbb{R})$  as $\zz^k_x(y) =
\zz^k(x-y)$, $y \in \R$.  Consider the Gaussian random field
$$B^k(t,x) = B_t(\zz^k_x), \; t \ge 0, x \in \R $$
with covariance
\[
\E B^k(t,x) B^{k^\prime}(s,y)= ( t \wedge s ) C^k ( x-y), \quad
x,y \in\R ,\; t,s\in[0,\infty),
\] where
$$C^k (x)= \zz^k \star \zz^k (x) \equiv \int_{\R} \zz^k_x(y) \zz^k(y)
dy.$$
 Note that $C^k(0) = k \|\zz\|^2$.
 
 The mollified KPZ equation (see \cite{BertiniGiacomin}) is given as follows.
\be \label{eq:mollkpz}
h^k(t,x) = h_0(x)  +   \frac{1}{2} \int_0^t  \left( \partial^2_x h^k(s, x) -  \left( ( \partial_x h^k  (s,x) )^2 - C^k(0) \right) \right)ds  + B^k(t,x).
\ee
The  initial condition $h_0$ is a $C(\R)$ valued random variable, independent of $B$, satisfying the following integrability condition
\begin{equation} \label{eqn: Condition_Initial}
\mbox{  for every } p>0 \mbox{ there exist } a_p >0 \mbox{ such that } \sup_{ x \in \R } e^{ - a_p |x| } \E e^{ p |h_0 (x)| } \equiv b_p < \infty.
\end{equation}
The hypothesis imposed on the initial condition in (2.4) of~\cite{BertiniGiacomin} is  weaker than the integrability condition in \eqref{eqn: Condition_Initial}, 
but the condition we impose covers all classical cases, in particular the combinations of the so called Brownian and Flat geometries (see last column of Table 5 in ~\cite{CorwinSurvey}). Note also that the assumption  in \eqref{eqn: Condition_Initial} (and also condition  (2.4) of~\cite{BertiniGiacomin}) exclude
the model setting considered in   ~\cite{Corwin1+1}, and~\cite{Spohn} where the initial condition is a distribution.

Solution of \eqref{eq:mollkpz} over any fixed time interval $[0,T]$ is understood in the weak sense, namely, it is a $\{\clf_t^B\}$- adapted stochastic process
$\{h^k(t,\cdot)\}_{0 \le t \le T} \equiv \{h^k_t\}_{0 \le t \le T}$
with sample paths in
$C([0,T]: C(\R)) \cap C((0,T]: C^1(\R))$, such that for every smooth function $\varphi$ on $\R$ with a compact support
$$
h^k_t(\varphi) = h_0(\varphi)  +   \frac{1}{2} \int_0^t  \left[ h^k_s(\varphi'') -  \left( ( \partial_x h^k_s )^2 - C^k(0) \right)(\varphi) \right]ds  + B^k_t(\varphi)$$
where for a  function $g$ on $\R$ (with suitable integrability properties), and $\varphi \in \mathcal{D}(\R)$, $g(\varphi) = \int_{\R} g(x) \varphi(x) dx$.
Here $C(\R)$ [resp. $C^1(\R)$] is the space of continuous [resp. continuously differentiable] functions on the real line.

The paper \cite{BertiniGiacomin} shows that there is a unique solution of \eqref{eq:mollkpz} in the class of  processes that satisfy
$$
\sup_{t \in [0,T], r \in \R} e^{-a |r|} \E \left [e^{-2h^k_t(r)} \right ] < \infty \mbox { for some } a \in (0, \infty).
$$
Furthermore, the paper \cite{BertiniGiacomin} shows that as $k \to \infty$, $h^k$ converges in distribution (as a $C([0,T]: C(\R))$ valued random variable) to a limit process $h$, which is {\em defined} to be the
solution of \eqref{eq:kpz}.  Throughout this work, this process (strictly speaking -- its probability law on $C([0,T]: C(\R))$) will be referred to as
{\em the BGC solution of the KPZ equation}.

We will now introduce a forward - backward stochastic differential equation associated with \eqref{eqn: Condition_Initial}.
Assume, without loss of generality, that we are given on $(\Om, \clf, \Pp)$ another real standard Brownian motion $W$ that is independent of $(B, h_0)$.  For $S > 0$ and $0 \le t \le S$, we denote
$$
\clf_{t,S}^W = \sigma \{W(s) -W(t): s \in [t,S]\}\vee \cln, \mbox{
and } \clf_t^S = \clf_{t,S}^W \vee \clf_{t}^B \vee \sigma\{h_0\}.$$
Note that $\clf^S \equiv \{\clf_t^S: t \in [0, S]\}$ is not a
filtration since the $\sigma$-fields in this collection are neither
increasing nor decreasing in $t$.  However, abusing terminology, we
will say a stochastic process $\{V_t\}_{t\in [0, S]}$ is $\clf^S$
adapted if $V_t$ is $\clf_t^S$ measurable for every $t \in [0, S]$.

Throughout we will fix a complete orthonormal system
$\{\gamma_m\}_{m \in \mathbb{N}}$ in $\Hh$ and denote $B(\gamma_m) =
\beta_m$.  Note that $\{\beta_m\}_{m \in \mathbb{N}}$ is a sequence
of independent standard Brownian motions, independent of $W$. For a
$\clf^S$ adapted $\Hh$-valued process $\{\varphi(t)\}_{0 \le t \le
S}$ satisfying $\E \int_0^S \|\varphi(t)\|^2 dt < \infty$, the
It\^{o} stochastic integral $\int_0^t \langle \varphi(r),
dB_r\rangle$ for $t \in [0, S]$ is well defined and is given as
\[
\int_0^t \langle \varphi(r), d B_r \rangle = \sum_{ m \in \N } \int_0^t \langle \varphi(r), \gamma_m \rangle d\beta_m(r),
\]
where the series on the right converges in $L^2(\Pp)$.

For a family of sigma algebras $\xbr{\clg_t; 0 \le t \le S }$, let $\mathscr{H}^p_S (  \clg ) $ [ resp. $\mathscr{H}^\infty_S ( \clg )$] be the space of measurable [resp. continuous] processes $\{ \phi(t): t \in [0,S] \}$ such that
$\phi(t)$ is $\clg_t$ measurable for every $t$, and
\[
\E \int_0^S | \phi(t) |^p dt < \infty \; [\mbox{ resp. } \E \sup_{ t \in [0,S]} | \phi(t) |^2 < \infty.]
\]

For $H \in \mathscr{H}^2_S (  \Fc^S )$, we denote the backward
stochastic integral of $H$ with respect to $W$ by $\int_t^S H(r)
\downarrow dW$. See Appendix for a brief introduction to such
stochastic integrals.

Let $X^S_t(x) \equiv X^S(t,x) = x + W(S) - W(t)$, for $0 \le t \le S$ and $x \in \R$.
Define
\be \label{eq:martterm}
Z^k(t,x) = \int_0^t \langle \zz^k_{X^S(r,x)}, dB_r \rangle, \; (t,x) \in [0,S] \times \R.\ee
Note that $\|\zz^k_x\|^2 = \|\zz^k_0\|^2 = C^k(0)$ for all $x$ and consequently 
\be
\label{ab824}
\int_0^t \|\zz^k_{X^S(r,x)}\|^2 dr = tC^k(0)  \mbox{ for all } t \in [0, S]. \ee
Also, $\{\zz^k_{X^S(t,x)}\}_{0\le t \le S}$ is $\clf^S$ adapted and so the stochastic integral
in \eqref{eq:martterm} is well defined and has the representation
$$
Z^k(t,x) = \sum_{m \in \N} \int_0^t \langle \zz^k_{X^S(r,x)}, \gamma_m \rangle d\beta_m(r)$$
with the series converging in $L^2(\Pp)$.
We now consider the following doubly backward SDE
\begin{align}  \notag
y_S^k (t,x) &= h_0( X_0^S (x) ) - \frac{1}{2} \int_0^t \parbar{ z_S^k(r,x)^2 - C^k(0) } dr + Z^k(t,x) \\
& \quad   - \int_0^t z_S^k(r,x) \downarrow dW(r). \label{eqn: DBSDE_d}
\end{align}
In such an equation one needs to solve for a pair of real stochastic processes $\{y_S^k(t,x), z^k_S(t,x)\}_{0\le t \le S}$ with suitable
integrability and measurability properties such that the terms on the right side of the equation are meaningful and the equation holds a.s. for all $t \in [0, S]$.
Frequently, when clear from the context, we will suppress $x$ and denote the solution as $\{y^k_S(t), z^k_S(t)\}_{0\le t \le S}$ or merely as $(y^k_S, z^k_S)$.
The first result in this work establishes wellposedness of the above equation.
\begin{theorem} \label{thm: ExistenceDBSDEMain}
Fix $x \in \R, k \in \N$ and $S>0$. Suppose that $h_0$  is a $C(\R)$ valued random variable, independent of $(B,W)$, satisfying \eqref{eqn: Condition_Initial}.
 Then there is a unique pair $(y_{S}^k,z_S^k) \in \mathscr{H}_S^\infty (\Fc^S) \times \mathscr{H}_S^2 ( \Fc^S )$  that satisfies equation~\eqref{eqn: DBSDE_d}.
\end{theorem}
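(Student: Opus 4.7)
\emph{Proof plan.} The specific form of the driver $-\tfrac12(z_S^k(r,x)^2 - C^k(0))$ is exactly what is needed to match the It\^o correction for $e^{y_S^k}$---in direct analogy with the classical Cole-Hopf relation between the KPZ equation and the stochastic heat equation---so the plan is to linearise (\ref{eqn: DBSDE_d}) by that change of variable, solve the resulting linear doubly-backward SDE explicitly by reducing it to a standard BSDE after a time-reversal, and then invert.

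\emph{Stage 1: linearisation.} Set $u(t,x) := \exp(y_S^k(t,x))$. Since $B$ and $W$ are independent, the forward stochastic integral $Z^k$ and the backward stochastic integral $\int\cdot\downarrow dW$ are orthogonal, so $d\langle y_S^k\rangle_t = (C^k(0) + z_S^k(t,x)^2)\,dt$ (using \eqref{ab824} for the $Z^k$-bracket). Applying It\^o's formula, the second-order correction $\tfrac12 u\,d\langle y_S^k\rangle$ cancels both driver terms and $u$ satisfies the \emph{linear} doubly-backward SDE
\begin{equation*}
u(t,x) = e^{h_0(X_0^S(x))} + C^k(0)\int_0^t u(r,x)\,dr + \int_0^t u(r,x)\,dZ^k(r,x) - \int_0^t u(r,x)\,z_S^k(r,x)\downarrow dW(r).
\end{equation*}
A second It\^o calculation shows that $\Xi(t,x) := u(t,x)\exp\bigl(-Z^k(t,x) - \tfrac12 C^k(0)\,t\bigr)$ satisfies the purely backward equation $d\Xi = -\Xi\,z_S^k \downarrow dW$ with initial value $\Xi(0,x) = e^{h_0(X_0^S(x))}$; the explicit exponential factor absorbs all of the $B$-dependence.

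\emph{Stage 2: time reversal and BSDE.} With $\tau := S-t$, $\hat W(\tau) := W(S)-W(S-\tau)$, $\hat\Xi(\tau) := \Xi(S-\tau,x)$ and $\hat z(\tau) := z_S^k(S-\tau,x)$, the backward stochastic calculus developed in the appendix identifies the backward $W$-integral with a standard forward $\hat W$-It\^o integral, and $\Xi$ is governed by the standard linear BSDE
\begin{equation*}
\hat\Xi(\tau) = e^{h_0(x + \hat W(S))} + \int_\tau^S \hat\Xi(\tau')\,\hat z(\tau')\,d\hat W(\tau'),
\end{equation*}
in the filtration generated by $\hat W$ and $h_0$. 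The terminal value $e^{h_0(x + \hat W(S))}$ has arbitrarily high moments by \eqref{eqn: Condition_Initial}, so martingale representation produces a unique pair $(\hat\Xi,\hat\Xi\hat z)$ with $\hat\Xi(\tau) = \E\bigl[e^{h_0(x+\hat W(S))}\,\big|\,\mathcal{G}_\tau\bigr]$, and strict positivity of the terminal value forces $\hat\Xi > 0$ so that $\hat z$ is well defined via the representation.

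\emph{Stage 3: inversion, a priori bounds, uniqueness.} Undoing the transformations yields $u(t,x) = \Xi(t,x)\exp(Z^k(t,x)+\tfrac12 C^k(0)\,t)$, $y_S^k = \ln u$ and $z_S^k(r,x) = \hat z(S-r,x)$, all in the correct measurability class. The bound $\E\sup_{t\in[0,S]}|y_S^k(t,x)|^2 <\infty$ follows by combining the exponential moment bound \eqref{eqn: Condition_Initial} on $h_0$ with the fact that $Z^k$ has deterministic bracket $C^k(0)\,t$, via Doob's maximal inequality applied to $u$ and $1/u$; the $\mathscr{H}^2_S$-bound on $z_S^k$ comes directly from the martingale representation. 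For uniqueness, if $(y_1,z_1)$ and $(y_2,z_2)$ are two solutions in $\mathscr{H}_S^\infty(\mathcal{F}^S)\times\mathscr{H}_S^2(\mathcal{F}^S)$, their associated $\Xi_i$'s both solve the linear BSDE of Stage 2, whose uniqueness forces $(y_1,z_1)=(y_2,z_2)$. The chief technical obstacle is making rigorous the two It\^o computations in Stage 1 given the simultaneous forward $B$- and backward $W$-integrals and the non-monotone family $\{\mathcal{F}_t^S\}$; the orthogonality of $Z^k$ and $\int\cdot\downarrow dW$ (from $B\perp W$) together with the backward stochastic calculus developed in the appendix are the key tools.
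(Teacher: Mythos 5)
Your architecture --- Cole--Hopf linearisation, time reversal to a genuine BSDE, explicit solution via conditional expectation and martingale representation, then inversion --- is exactly the paper's route (Lemma~\ref{lemma: ExistenceHopfCole} followed by the logarithmic transform). But two steps as written would fail.

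\textbf{The It\^o computation in Stage 1 has the wrong sign, and the claimed linear equation is false.} In the doubly backward calculus the quadratic variation of the backward integral enters It\^o's formula with a \emph{minus} sign: the correction in Lemma~\ref{lemma: Ito}(i) is $+\tfrac{C^k(0)}{2}\int\phi''\gamma^2\,dr-\tfrac12\int\phi''\delta^2\,dr$, not a uniform $+\tfrac12\,d\langle y\rangle$. Your identity $d\langle y_S^k\rangle_t=(C^k(0)+z_S^k(t,x)^2)\,dt$ used with a positive correction throughout is therefore not valid. With the correct formula, the substitution $u=e^{+y}$ gives
\begin{equation*}
du = \bigl(C^k(0)\,u - u\,z_S^k{}^2\bigr)\,dt + u\,dZ^k - u\,z_S^k\downarrow dW ,
\end{equation*}
so the quadratic term $-\int_0^t u\,z_S^k{}^2\,dr$ survives: the equation is not linear, $\Xi$ satisfies a backward equation with driver $-\Xi z^2$ rather than a driverless one, and the explicit formula $\hat\Xi(\tau)=\E[e^{h_0(x+\hat W(S))}\mid\clg_\tau]$ does not yield a solution of \eqref{eqn: DBSDE_d}. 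The correct linearising substitution is $u=e^{-y}$ (the paper's \eqref{eq:uv}), for which the driver $-\tfrac12(z^2-C^k(0))$ exactly offsets \emph{both} second-order corrections and one arrives at \eqref{eqn: HopfCole}; the resulting martingale $\E[u_0(\hat X(S))\exp\{\tilde Z(S)-\tfrac12 C^k(0)S\}\mid\clg_t]$ genuinely mixes $B$ and $W$. A symptom that something is off in your version: your $\hat z$ is adapted to the filtration of $\hat W$ and $h_0$ alone, i.e.\ independent of $B$, which cannot hold for \eqref{eqn: DBSDE_d}. (Relatedly, the martingale representation naturally gives adaptedness to $\clg_t$, which contains all of $B$; showing measurability with respect to the non-monotone $\sigma$-fields $\Fc^S_t$ needs the separate independence argument the paper supplies.)

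\textbf{The uniqueness argument in Stage 3 has a gap even after the sign is fixed.} Given an arbitrary solution $(y,z)\in\Hspace{\infty}{\Fc^S}\times\Hspace{2}{\Fc^S}$ of \eqref{eqn: DBSDE_d}, nothing guarantees that the transformed pair $(e^{-y},e^{-y}z)$ lies in $\Hspace{\infty}{\Fc^S}\times\Hspace{2}{\Fc^S}$ --- exponentials of processes with finite second moments need not be integrable --- so you cannot place both transformed solutions in the uniqueness class of the linear equation. This is exactly why the paper does \emph{not} prove uniqueness of \eqref{eqn: DBSDE_d} through the linear equation: it works directly on the difference of two solutions, writing $z^2-\tilde z^2=2\bar z\varphi$ with $\varphi=(z+\tilde z)/2$, truncating $\varphi$ at level $M$, and running a Tanaka/Meyer--It\^o plus Gronwall argument on $(y_M)_\pm$ before removing the truncations. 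Some argument of this kind, or an a priori exponential-integrability bound valid for every solution in the stated class, is needed to close your uniqueness step.
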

\begin{remark}
    By uniqueness of $(y_{S}^k,z_S^k)$, we mean that if $(y,z), (\tilde y, \tilde z) \in \mathscr{H}_S^\infty (\Fc^S) \times \mathscr{H}_S^2 ( \Fc^S )$ are two
    solutions of \eqref{eqn: DBSDE_d} then
    $(y(t), z(t)) = (\tilde y(t), \tilde z(t))$, a.e. $t \in [0,S]$, a.s.
\end{remark}

Our second result concerns the asymptotic behavior of $y^k_S$, as $k \to \infty$, and relation with the KPZ equation.
\begin{theorem} \label{thm: asympink}
Fix $x \in \R$ and $ k \in \N$.  Let $h_0$ be as in Theorem \ref{thm: ExistenceDBSDEMain} and, for $k \ge 1$
and $S>0$, $(y_S^k,z_S^k)$ be as obtained from Theorem \ref{thm: ExistenceDBSDEMain}. Then, the sequence $y^k_S$ is tight in the space $C([0,S]: C(\R))$. 

Furthermore, for any $T> 0$ and $S \in [0,T]$, if we let  $\hat h^k(S,x) = y^k_S(S,x)$, then $\hat h^k$ is a $C([0,T]: C(\R))$ valued random variable such that $\hat h^k$ converges in distribution, as $k \to \infty$, to the BGC solution of the KPZ equation.
\end{theorem}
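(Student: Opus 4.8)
The strategy is to identify, for each fixed $S$, the solution of \eqref{eqn: DBSDE_d} with the mollified KPZ field of \eqref{eq:mollkpz} composed with the time-reversed Brownian motion $X^S$. Concretely, the plan is to show that
\[
y_S^k(t,x) = h^k\big(t, X^S_t(x)\big), \qquad z_S^k(t,x) = \partial_x h^k\big(t, X^S_t(x)\big), \qquad (t,x)\in[0,S]\times\R ,
\]
almost surely, where $h^k$ is the unique solution of \eqref{eq:mollkpz}. Granting this, both assertions follow readily. Since $X^S_S(x)=x$ we get $\hat h^k(S,x)=y_S^k(S,x)=h^k(S,x)$ for each $(S,x)$ a.s.; passing to the continuous-in-$(S,x)$ version (the right side is continuous by \cite{BertiniGiacomin}) gives $\hat h^k=h^k$ as $C([0,T]:C(\R))$-valued random variables, and since \cite{BertiniGiacomin} shows $h^k$ converges in distribution to the BGC solution $h$ in $C([0,T]:C(\R))$, so does $\hat h^k$. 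Tightness of $\{y_S^k\}_k$ in $C([0,S]:C(\R))$ will follow from the identification by a soft argument given below. Thus the crux is the displayed identification.

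To prove it I would fix $S$ and $x$ and verify that the pair $\tilde y(t):=h^k(t,X^S_t(x))$, $\tilde z(t):=\partial_x h^k(t,X^S_t(x))$ lies in $\mathscr{H}^\infty_S(\Fc^S)\times\mathscr{H}^2_S(\Fc^S)$ and solves \eqref{eqn: DBSDE_d}; uniqueness in Theorem \ref{thm: ExistenceDBSDEMain} then forces $(y_S^k,z_S^k)=(\tilde y,\tilde z)$. The computation behind this is a ``doubly stochastic'' It\^o--Wentzell formula for $h^k(t,X^S_t(x))$. The field $h^k(\cdot,y)$ is a forward semimartingale in its time variable, driven by the cylindrical noise $B$; the process $X^S_t(x)=x+W(S)-W(t)$ is, as a function of $t\in[0,S]$, a \emph{backward} semimartingale driven by $W$; and $W$ is independent of $B$, so there is no joint variation between the $dB$-martingale part of $h^k$ and $X^S$. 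Expanding $h^k(t,X^S_t(x))$ accordingly produces exactly four contributions: from the drift of \eqref{eq:mollkpz}, the term $\tfrac12\big(\partial_x^2 h^k-(\tilde z^2-C^k(0))\big)(t,X^S_t(x))\,dt$; from the martingale part of \eqref{eq:mollkpz}, the term $\langle \zz^k_{X^S_t(x)}, dB_t\rangle$, which integrates to $Z^k(t,x)$; from the first-order term $\partial_x h^k(t,X^S_t(x))\,dX^S_t(x)$, which, because $X^S$ is backward adapted and $dX^S_t=-dW(t)$, is the \emph{backward} It\^o integral $-\int_0^t \tilde z(r)\downarrow dW(r)$; and the second-order It\^o correction from $d\langle X^S(x)\rangle_t=dt$, which for a backward semimartingale carries a \emph{minus} sign, namely $-\tfrac12\partial_x^2 h^k(t,X^S_t(x))\,dt$. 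The two $\partial_x^2 h^k$ contributions cancel identically, and using $h^k(0,X^S_0(x))=h_0(X^S_0(x))$ one is left with precisely \eqref{eqn: DBSDE_d}. Making this rigorous needs some care: (a) the It\^o--Wentzell formula must be justified although $h^k(t,\cdot)$ is only $C^1$ a priori---one can mollify $h^k$ in space and pass to the limit, the point being that the $\partial_x^2 h^k$ terms cancel before any pointwise second derivative is needed (alternatively, $h^k(t,\cdot)$ is in fact smooth for $t>0$ because $y\mapsto\zz^k_y$ is smooth); (b) the possible blow-up of $\partial_x h^k(t,\cdot)$ as $t\downarrow 0$ is dealt with by first deriving the identity on $[\varepsilon,S]$ and letting $\varepsilon\downarrow 0$; and (c) the memberships $\tilde y\in\mathscr{H}^\infty_S(\Fc^S)$, $\tilde z\in\mathscr{H}^2_S(\Fc^S)$ are obtained from the moment and regularity estimates for $h^k$ and $\partial_x h^k$ in \cite{BertiniGiacomin} together with \eqref{eqn: Condition_Initial}.

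For the tightness of $\{y_S^k\}_k$ in $C([0,S]:C(\R))$ I would use the continuous mapping theorem. The map $\Psi: C([0,S]:C(\R))\times C([0,S]:\R)\to C([0,S]:C(\R))$ defined by $\Psi(g,w)(t,x)=g\big(t,\,x+w(S)-w(t)\big)$ is continuous, and by the identification above $y_S^k=\Psi(h^k,W)$. The sequence $\{h^k\}_k$ is tight in $C([0,S]:C(\R))$ since it converges in distribution there \cite{BertiniGiacomin}, and $W$ is a fixed Brownian motion independent of $B$, so $\{(h^k,W)\}_k$ is tight in the product space; hence so is its continuous image $\{y_S^k\}_k$.

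The step I expect to be the real obstacle is the doubly stochastic It\^o--Wentzell formula of the second paragraph: formulating and proving it cleanly in this setting---an infinite dimensional forward noise driving $h^k$, a backward It\^o integral in $W$, a merely quadratic (hence non-Lipschitz) dependence on $\partial_x h^k$, and the degeneration of $\partial_x h^k$ as $t\downarrow 0$---while checking that the resulting pair lands in $\mathscr{H}^\infty_S(\Fc^S)\times\mathscr{H}^2_S(\Fc^S)$ so that Theorem \ref{thm: ExistenceDBSDEMain} is applicable. Once the identification is established, the rest, including the reduction to the convergence theorem of \cite{BertiniGiacomin}, is essentially formal.
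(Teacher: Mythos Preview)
Your plan is sound and would succeed, but it is a genuinely different route from the paper's. The paper does not verify that $(h^k(t,X^S_t(x)),\partial_x h^k(t,X^S_t(x)))$ solves \eqref{eqn: DBSDE_d} via an It\^o--Wentzell computation; instead it exploits the explicit Feynman--Kac representation \eqref{eqn: FeynmanKac} for $u^k_S$ already obtained in the course of the existence proof (Lemma \ref{lemma: ExistenceHopfCole}), together with $y^k_S=-\log u^k_S$. Conditioning additionally on $W(t)$ and disintegrating over Brownian bridge measures, that conditional expectation is rewritten as $\psi^k_t(X^S_t(x))$, where $\psi^k$ is the solution of the regularized stochastic heat equation; the matching is exactly the Feynman--Kac formula of Bertini--Cancrini for $\psi^k$. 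Taking logs gives $y^k_S(t,x)=-\log\psi^k_t(X^S_t(x))=h^k(t,X^S_t(x))$, and at $t=S$ this is $h^k(S,x)$, whence the convergence and tightness statements follow from \cite{BertiniCancrini,BertiniGiacomin}.

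What each buys: the paper's argument is short and entirely circumvents the issue you flag as the obstacle---no It\^o--Wentzell formula, no need to control $\partial_x h^k$ near $t=0$, and no separate check that $(\tilde y,\tilde z)\in\mathscr H^\infty_S(\Fc^S)\times\mathscr H^2_S(\Fc^S)$---because the identification is obtained at the level of the Hopf--Cole transformed (linear) equation, where the explicit representation is already in hand. Your approach, by contrast, is more direct and does not rely on the particular closed form of the solution; it would generalize more readily to nonlinearities not admitting a Hopf--Cole linearization, at the price of the technical work you outline.
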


The rest of this work is devoted to the proof of the above two results.

\section{Proof of Theorem \ref{thm: ExistenceDBSDEMain}.}
Fix  $k \in \N$ and $S>0$.  Suppose that $(y^k_S, z^k_S)$ solves equation \eqref{eqn: DBSDE_d}.
Define
\be
\label{eq:uv}
u^k_S(t,x) = \exp(-y^k_S(t,x)), \;\; v^k_S(t,x) = u^k_S(t,x)z^k_S(t,x), \; 0 \le t \le S, \; x \in \R .
\ee
A formal application of It\^{o}'s formula (Lemma~\ref{lemma: Ito})  yields the following equation for $(u^k_S, v^k_S)$.
\begin{align}
u^k_S(t,x) &=  u_0( X^S (0,x) ) -  \int_0^t  u^k_S(r,x) d Z^k(r,x) + \int_0^t v^k_S(r,x) \downarrow dW(r),
\label{eqn: HopfCole}
\end{align}
where $u_0(x) = \exp(-h_0(x))$.
The transformation in \eqref{eq:uv} thus motivates the study of equation \eqref{eqn: HopfCole}, and as a first step we will now establish
 the wellposedness of \eqref{eqn: HopfCole}. Namely, we first prove the existence and uniqueness of a pair $(u^k_S, v^k_S)$, with appropriate integrability and measurability properties, which satisfies
\eqref{eqn: HopfCole}.  Note that the integrals on the right side of \eqref{eqn: HopfCole}
are well defined if $(u_S^k,v_S^k) \in \mathscr{H}_S^\infty (\Fc^S) \times \mathscr{H}_S^2 ( \Fc^S )$.
\begin{lemma} \label{lemma: ExistenceHopfCole} Fix $x \in \R, k \in \N$ and $S>0$. Then there is a unique pair
    $(u_S^k,v_S^k) \in \mathscr{H}_S^\infty (\Fc^S) \times \mathscr{H}_S^2 ( \Fc^S )$  that satisfies equation~\eqref{eqn: HopfCole}. Furthermore,
\begin{align}
u_S^k(t,x) &= \E \xcmd{ u_0( X^S (0,x) ) \exp \left\{ - Z^k (t,x) -\frac{1}{2}C^k(0)t \right\} }{\Fc^S_t}  \label{eqn: FeynmanKac}.
\end{align}

Finally, for any $p\geq 2$, there is a  $C(p,k) \in (0, \infty)$ such that, for all $x\in \R$,
\begin{equation} \label{eqn: H_p}
\E \sup_{ t \leq S } u_S^k(t,x)^p + \E \left( \int_0^t v_S^k(r,x)^2 dr \right)^{ p/2} \leq C(p,k) ( 1+  \E u_0 ( X^S (0,x) )^{ 4 p} ).
\end{equation}
\end{lemma}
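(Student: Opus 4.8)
The plan is to exploit that, after the Hopf--Cole change of variables \eqref{eq:uv}, equation \eqref{eqn: HopfCole} is \emph{linear}; this is what makes existence, uniqueness, the closed form \eqref{eqn: FeynmanKac}, and the bound \eqref{eqn: H_p} all accessible. Write $N^k(t,x)=\exp\{-Z^k(t,x)-\tfrac12 C^k(0)t\}$ for the Dol\'eans exponential of $-Z^k(\cdot,x)$; by \eqref{ab824} its exponent has the deterministic quadratic variation $\langle Z^k(\cdot,x)\rangle_t = tC^k(0)$, so $N^k(\cdot,x)$ is a true (exponential) martingale and $\E[N^k(t,x)^q\mid\sigma(W)]=\exp\{\tfrac12 q(q-1)C^k(0)t\}$ for every $q\ge1$. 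All such moments are finite since $C^k(0)=k\|\zz\|^2<\infty$, which is where the $k$--dependence of $C(p,k)$ in \eqref{eqn: H_p} originates.

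Uniqueness and the representation \eqref{eqn: FeynmanKac} I would prove together. Let $(u,v)\in\mathscr{H}^\infty_S(\Fc^S)\times\mathscr{H}^2_S(\Fc^S)$ solve \eqref{eqn: HopfCole}, and apply the It\^o formula of Lemma~\ref{lemma: Ito} to the product $u(t,x)N^k(t,x)^{-1}$ (the integrating factor). Because $B\perp W$, the backward integral $\int v\downarrow dW$ in $du$ has no cross variation with $N^k(\cdot,x)^{-1}$, and the two $dZ^k$ terms cancel, leaving
\[
u(t,x)\,N^k(t,x)^{-1}=u_0\big(X^S(0,x)\big)+\int_0^t N^k(r,x)^{-1}\,v(r,x)\downarrow dW(r),\qquad 0\le t\le S.
\]
Multiply by $N^k(t,x)$ and condition on $\Fc^S_t$: the left side is $\Fc^S_t$--measurable, hence equals $u(t,x)$, while the integrand $r\mapsto(N^k(t,x)/N^k(r,x))\,v(r,x)$ of the resulting backward integral is, for each $r\le t$, measurable with respect to a $\sigma$--field independent of the increments of $W$ over $[0,r]$ --- this is where one uses that $N^k(t,x)/N^k(r,x)$ and $v(r,x)$ involve $W$ only through its increments over $[r,S]$ --- so that integral has vanishing $\Fc^S_t$--conditional expectation. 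This yields \eqref{eqn: FeynmanKac}. In particular $u$ is determined; for two solutions the difference $\bar u$ of the first coordinates then vanishes a.e., and \eqref{eqn: HopfCole} forces $\int_0^t\bar v\downarrow dW=0$ for all $t$, whence $\bar v=0$ a.e.\ by the $L^2$--isometry of the backward integral.

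For existence, define $u^k_S(t,x)$ by the right side of \eqref{eqn: FeynmanKac}. Bounding $\E\,u_0(X^S(0,x))^q=\E\,e^{-qh_0(x+W(S))}$ by \eqref{eqn: Condition_Initial} and the Gaussian law of $W(S)$, and combining with the moment formula for $N^k$ via H\"older, gives $u^k_S(t,x)\in L^q(\Pp)$ for all $q$; it is $\Fc^S$--adapted, and with the sup bound below it lies in $\mathscr{H}^\infty_S(\Fc^S)$. The companion $v^k_S$ is obtained from a martingale representation theorem applied to $t\mapsto u^k_S(t,x)N^k(t,x)^{-1}$ in the $W$--time reversed and enlarged by $\clf^B_S\vee\sigma\{h_0\}$ (Appendix), after which Lemma~\ref{lemma: Ito} confirms that $(u^k_S,v^k_S)$ solves \eqref{eqn: HopfCole}. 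For \eqref{eqn: H_p}, conditional Jensen and Cauchy--Schwarz in \eqref{eqn: FeynmanKac} give $\sup_{t\le S}\E\,u^k_S(t,x)^q\le C(q,k)(1+\E\,u_0(X^S(0,x))^{2q})$ for every $q$; the expectation of $\sup_{t\le S}(u^k_S)^p$ then follows from It\^o's formula applied to $(u^k_S)^p$, the Burkholder--Davis--Gundy inequality for the forward and backward martingale terms and Gronwall, the term $\int_0^\cdot(u^k_S)^{p-2}(v^k_S)^2$ being absorbed with the help of the moment bound just obtained; finally, since $\int_0^\cdot N^k(r,x)^{-1}v^k_S(r,x)\downarrow dW(r)=u^k_S(\cdot,x)N^k(\cdot,x)^{-1}-u_0(X^S(0,x))$ has quadratic variation $\int_0^\cdot N^k(r,x)^{-2}v^k_S(r,x)^2\,dr$, BDG bounds its $p/2$--moment, and one passes from $\int_0^\cdot N^k(r,x)^{-2}(v^k_S)^2$ to $\int_0^\cdot(v^k_S)^2$ using that $\sup_{t\le S}N^k(t,x)$ and $\sup_{t\le S}N^k(t,x)^{-1}$ have finite moments of every order (Doob, plus the deterministic quadratic variation of $Z^k$); iterating Cauchy--Schwarz twice accounts for the power $4p$ in \eqref{eqn: H_p}.

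The hard part is not this linear bookkeeping but making the stochastic calculus rigorous over $\{\Fc^S_t\}_{0\le t\le S}$, which is not a filtration: one needs Lemma~\ref{lemma: Ito} for products of processes forward in the $B$--noise and backward in the $W$--noise, the correct interplay of $\int\cdot\,dZ^k$ with $\int\cdot\downarrow dW$, the admissibility of integrands such as $(N^k(t,x)/N^k(r,x))v(r,x)$ for the backward $W$--integral, and a martingale representation theorem suited to this two--sided geometry. I expect the most delicate points to be checking that the candidate built from \eqref{eqn: FeynmanKac} genuinely solves \eqref{eqn: HopfCole} and the careful tracking, in the conditioning arguments, of which $W$--increments each quantity depends on.
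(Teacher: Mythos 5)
Your outline shares the paper's two essential ideas --- the Dol\'eans exponential of $-Z^k$ as the integrating factor, and time reversal of the $W$--clock to land in a genuine filtration where the classical martingale representation theorem applies --- but you organize them differently. The paper time--reverses \emph{immediately}: it rewrites~\eqref{eqn: HopfCole} as the equivalent forward--in--$\tilde W$ equation~\eqref{eqn: TimeReversalLinear}, then constructs $M$ as a bona fide $\{\clg_t\}$--martingale, represents it via MRT as $M(0)+\int_0^\cdot J\,d\tilde W$, sets $U=EM$, $V=EJ$, and reads off the Feynman--Kac formula, the moment bounds, and uniqueness directly from these explicit expressions (the moment bound for $V$ comes from a martingale--moment inequality for $J$, not from BDG plus Gronwall). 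You instead stay in forward time for the Feynman--Kac identity and for uniqueness, by applying a product rule to $u\,N^{-1}$ and killing the backward $W$--integral by conditioning on $\Fc^S_t$; this is a clean and essentially equivalent route, though note that Lemma~\ref{lemma: Ito} as stated is a scalar chain rule, not a product rule, so you would have to derive the mixed product rule (which is exactly what the paper does by hand in its proof of~\eqref{ab850}).

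The genuine gap is the adaptedness of $v$. What the (time--reversed) martingale representation theorem produces is a process $J$ that is progressively measurable with respect to $\clg_t=\clf_t^{\tilde W}\vee\clf_{0,S}^{\tilde B}\vee\sigma\{h_0\}$, i.e.\ it may a priori depend on the \emph{entire} $B$--path, while membership in $\mathscr{H}^2_S(\Fc^S)$ requires $v(t)$ to be $\clf_{t,S}^W\vee\clf_t^B\vee\sigma\{h_0\}$--measurable, depending on $B$ only through $\clf^B_t$. This reduction from $\clg$--adaptedness to $\tilde\clf^S$--adaptedness (equivalently $\Fc^S$--adaptedness after reversing time back) is the real technical content of the existence proof: the paper devotes a separate argument to it, approximating $J$ by truncations $J^c$, writing $V(t)$ as an a.e.\ limit of conditional expectations of $E(t)F^c_\varepsilon(\tilde W(t+\varepsilon)-\tilde W(t))/\sqrt\varepsilon$, and exploiting the independence of $\tilde Z$--increments over $[t,t+\varepsilon]$ from $\clf_{0,t}^{\tilde B}$ to replace $\clg_t$ by $\tilde\clf^S_t$ in the conditioning. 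You flag ``the admissibility of integrands'' as delicate but do not supply any argument that would accomplish this reduction; without it, you have a solution in $\mathscr{H}^\infty_S(\clg)\times\mathscr{H}^2_S(\clg)$ but not in the space demanded by the lemma, and uniqueness in the larger space does not automatically transfer. This is the one step you would have to add to make the proposal complete.
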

{\bf Proof.}
  Since $x, k$ and $S$ are fixed, we omit them from the notation throughout this proof. In particular we write
$Z^k(t,x)$ and $X^S(t,x)$ as $Z(t)$ and $X(t)$  respectively.
For a stochastic  process $H = \{H(t)\}_{0 \le t \le S}$, we define its time reversed path
 $$\tilde H(t)=H(S-t)-H(S), \; 0 \le t \le S.$$
 In particular,
$$\tilde B_t(f) = B_{S-t} (f) - B_S (f),\; f \in \Hh, \mbox{ and } \tilde W(t) =  W(S-t)- W(S).$$
Define $\hat X(r)= x - \tilde{W}(r)$.  Then note that
\begin{align}
    \tilde Z(t) =& Z(S-t) - Z(S)
    = -\int_{S-t}^{S} \langle \zz^k_{X(r)}, dB_r\rangle \nonumber \\
    =& -\sum_m\int_{S-t}^{S} \langle \zz^k_{X(r)}, \gamma_m\rangle d\beta_m(r)
    =\sum_m\int_{0}^{t} \langle \zz^k_{\hat X(r)}, \gamma_m\rangle d\tilde\beta_m(r)\nonumber \\
    =& \int_{0}^{t} \langle \zz^k_{\hat X(r)}, d\tilde B_r\rangle, \label{eq:timrev}
\end{align}
where the next to last equality follows on noting that
$$ \hat X(S-r) = x - \tilde W(S-r) = x - W(r) + W(S) = X(r).$$
Let, for $0 \le t \le s \le S$,
$$
\clf_{t,s}^{\tilde B} = \sigma \{\tilde B_s(v) - \tilde B_u(v): u
\in [t,s], v \in \Hh\}\vee \cln 
$$ and
\begin{equation} \label{eq:1233a}\clf_t^{\tilde W} = \sigma\{\tilde W(s): 0 \le s \le t\}\vee \cln = \clf_{S-t,S}^{W}.\end{equation}
Note in particular that
\begin{align}\clf_{t,S}^{\tilde B} =& \sigma \{\tilde B_S(v) - \tilde B_u(v): u
\in [t,S], v \in \Hh\}\vee \cln \nonumber\\
= & \sigma \{  B_u(v): u \in [0,S-t], v \in \Hh\}\vee \cln =
\clf_{S-t}^B \label{eq:1233}
\end{align}
Also, let
\begin{equation} \label{eqn: TildeFS}
\tilde \clf_t^S = \clf_t^{\tilde W} \vee \clf_{t,S}^{\tilde B}\vee
\sigma \{h_0\} = \clf_{S-t,S}^W
 \vee \clf_{S-t}^B \vee \sigma \{h_0\}= \clf_{S-t}^S.
 \end{equation}
From Corollary \ref{lemma: FBRelation} in the Appendix it follows
that in order to prove the first statement of the lemma it suffices
to show that there exists a unique pair $(\hat u, \hat v) \in
\mathscr{H}_S^\infty ( \tilde \clf^S)   \times \mathscr{H}_S^2 (
\tilde \clf^S )$ that solves the time reversed equation
\begin{equation} \label{eqn: TimeReversalLinear}
\hat u (t) = u_0 ( \hat X(S) ) + \int_t^S \hat u(r) \downarrow d \tilde Z (r ) - \int_t^S \hat v(r) d\tilde W(r).
\end{equation}
The unique solution $(u,v)$ of \eqref{eqn: HopfCole} can then be obtained on taking $(u(t), v(t)) = (\hat u(S-t), \hat v(S-t))$.
We now consider the unique solvability of \eqref{eqn: TimeReversalLinear}.
Let  $\clg_t = \clf_t^{\tilde W} \vee \clf_{0,S}^{\tilde B}\vee \sigma\{h_0\}$.
Recalling that $\|\zz^k_x\|^2 = C_k(0)$ for all $x \in \R$ and elementary properties of Brownian motions, we see that
\be \label{eq:ab1905}  \E\sup_{0 \le t \le S}\exp \{m |Z(t)|\}  < \infty , \; \E \exp \{m |\hat X(S)|\} < \infty , \; \mbox{ for all } m \in \N .\ee
Combining this with the integrability condition \eqref{eqn: Condition_Initial} and an application of Cauchy-Schwarz inequality we have
$$ \E \left [u_0 (  \hat X(S)  ) \exp\{  \tilde Z(S) \} \right]^2 < \infty .$$
Consequently,
\be
\label{eq:basmart}
M(t) = \E \left [ u_0(\hat X(S))\exp \{\tilde Z(S) - \frac{1}{2}C^k(0)S \} \mid \clg_t \right], \; 0 \le t \le S,\ee
is a square integrable $\clg_t$ - martingale.
From a straightforward extension of the classical martingale representation theorem, there is a  $\clg_t$-progressively measurable process $\xbr{ J(t); 0 \leq t \leq S }$ such that
$\E \int_0^S J(r)^2 dt < \infty$, and
\begin{equation} \label{eqn: Jdef}
M(t) = M(0) + \int_0^t J(r) d\tilde W(r), \; 0 \le t \le S.
\end{equation}
Define, for $0 \le t \le S$, 
 \be \label{eq:uv2} E(t) = \exp\{ -\tilde Z(t) + \frac{1}{2}C^k(0)t\},\; V(t)=  E(t)J(t),
\; U(t) = E(t)M(t).\ee
We now show that $(U,V) \in \Hspace{\infty}{\tilde \clf^S} \times \Hspace{2}{\tilde \clf^S}$.
More precisely, for the process $V$ we will show that there is a $\tilde V \in \Hspace{2}{\tilde \clf^S}$ such that
$\tilde V(t) = V(t)$ for a.e. $t$, a.s.
Consider $U$ first.  Note that $U$ can be rewritten as
\begin{align}
    U(t) =& E(t) \left (M(S) - \int_t^S J(r) d\tilde W(r)\right ) \label{eq:537new}\\
    =& u_0(\hat X(S))\frac{E(t)}{E(S)} - E(t)\int_t^S J(r) d\tilde W(r).\nonumber
\end{align}
From \eqref{eq:uv2} and \eqref{eq:timrev} it follows that $U$ is $\{\clg_t\}$ adapted.  Next, using the independence between
$\tilde Z(S)-\tilde Z(t)$ and $\clf_{0,t}^{\tilde B}$ and that $\{M(t)\}$ is a $\{\clg_t\}$ martingale, we see from the above display that
\begin{align}
U(t) =& \E(U(t)\mid \clg_t) \nonumber\\
 =& \E \left [u_0(\hat X(S))\frac{E(t)}{E(S)} \mid \clg_t \right ] \label{eq:ab1903}\\
=& \E \left [u_0(\hat X(S))\frac{E(t)}{E(S)} \mid \clf_t^{\tilde W} \vee \clf_{t,S}^{\tilde B}\vee \sigma\{h_0\} \right ].\label{eq:fk2}
\end{align}
Thus $U$ is $\tilde \clf^S$ - adapted.  We now argue that there is a $\tilde V$ that is $\tilde \clf^S$ - adapted and such that
$V(t) = \tilde V(t)$, for a.e. $t$, a.s.
For $c \in (0, \infty)$, define $J^c(r) = J(r) 1_{|J(r| \le c)}$.
Let, for $\varepsilon > 0$
$$F^c_{\varepsilon} = \frac{1}{\sqrt{\varepsilon}} \int_{t}^{t+\varepsilon} J^c(r) d\tilde W(r).$$
By It\^o's isometry, we have that
$$
\E\left [ F_\varepsilon^c \frac{\tilde W(t+\varepsilon) - \tilde W(t)}{\sqrt{\varepsilon}} \mid \clg_t\right] = \E \left [\frac{1}{\varepsilon}\int_{t}^{t+\varepsilon} J^c(r) dr \mid \clg_t\right].$$
Sending $\varepsilon \to 0$, $c \to \infty$, and recalling that $J(t)$ is $\clg_t$ measurable, we have that
$$
\limsup_{c \to \infty}\limsup_{\varepsilon \to 0}\E\left [ F^c_\varepsilon \frac{\tilde W(t+\varepsilon) - \tilde W(t)}{\sqrt{\varepsilon}} \mid \clg_t\right] = J(t) = \frac{V(t)}{E(t)},
\; \mbox{ a.e. } t, \; \mbox{ a.s. }$$
and therefore since $E(t)$ is $\clg_t$ measurable
\be \label{ab1820}
V(t) = \limsup_{c \to \infty}\limsup_{\varepsilon \to 0}\E\left [ E(t)F^c_\varepsilon \frac{\tilde W(t+\varepsilon) - \tilde W(t)}{\sqrt{\varepsilon}} \mid \clg_t\right]\; \mbox{ a.e. } t, \; \mbox{ a.s. }.\ee
Define $U^c$ by replacing $J$ with $J^c$ on the right side of \eqref{eq:537new}.  Then a calculation similar to the one leading to \eqref{eq:fk2} shows that $U^c$ is $\tilde \clf^S$ - adapted.

Also note that
$$
\sqrt{\varepsilon} E(t)F^c_{\varepsilon} = \exp\{\tilde Z(t+\varepsilon) - \tilde Z(t) -\frac{1}{2}C^k(0) \varepsilon\}U^c(t+\varepsilon) - U^c(t)$$
and consequently $\sqrt{\varepsilon} E(t)F^c_{\varepsilon}$ is independent of $\clf_{0,t}^{\tilde B}$.  Thus the right side of \eqref{ab1820} equals
$$
\limsup_{c \to \infty} \limsup_{\varepsilon \to 0}\E\left [ E(t)F^c_\varepsilon \frac{\tilde W(t+\varepsilon) - \tilde W(t)}{\sqrt{\varepsilon}} \mid \tilde \clf_t^S\right]$$
and so $\tilde V$ defined by the right side of \eqref{ab1820} is $\tilde \clf^S$ - adapted.

We now prove the stated integrability properties of $(U,V)$.  From \eqref{eq:ab1903}, for $m \in \N$,
\be
\label{eq:ab1911}
\E \sup_{0\le t \le S} U(t)^m \le \E \left [\sup_{0\le t \le S}  \E \left[\left(\frac{u_0(\hat X(S))^m}{E(S)^m}\sup_{0\le r \le S}E(r)^m\right)  \mid \clg_t \right]\right].
\ee
From  \eqref{eq:ab1905} it follows that, for any $m\ge 1$,
$$ \E\left[\frac{u_0(\hat X(S))^m}{E(S)^m}\sup_{0\le r \le S}E(r)^m\right] < \infty.$$
Standard martingale inequalities now show that the right side of \eqref{eq:ab1911} is finite, indeed we have  that, for every $m \in \N$
there are  $C_1(m), C_2(m) \in (0, \infty)$ such that
\begin{align}
\E \sup_{0\le t \le S} U(t)^m \le & C_1(m)\left(1+ \E\left[\frac{u_0(\hat X(S))^{2m}}{E(S)^{2m}}\sup_{0\le r \le S}E(r)^{2m}\right]\right)\nonumber\\
\le & C_2(m) \left(1+ \E u_0((\hat X(S)) ^{4m}\right) \label{eq:ab1917} \\
<& \infty . \nonumber
\end{align}
Next consider $V$.  By classical martingale inequalities (cf. Proposition 3.3.26 of \cite{KaratzasBook}), for every $m \in \N$ there is a
$b_m \in (0,\infty)$ such that
$$
\E \left ( \int_0^S J(r)^2 dr\right)^m \le b_m \left( \E M(S)^{2m} + \E M(0)^{2m}\right).
$$
Thus, recalling the definition of $\{M(t)\}$  (see \eqref{eq:basmart} ) and using \eqref{eq:ab1905} once more, we have that for every
$m \in \N$ there is a $C_3(m) \in (0, \infty)$, such that
\begin{align}
\E \left ( \int_0^S J(r)^2 dr\right)^m \le & C_3(m) \left(1 + \E (u_0(\hat X(S))^{4m} )\right) \label{eq:bdsi} \\
< & \infty  . \nonumber
\end{align}
Next,
\begin{align*}
    \E \left ( \int_0^S V(r)^2 dr\right)^m =& \E \left ( \int_0^S E(r)^2 J(r)^2 dr\right)^m\\
    \le & \E \left[\left ( \sup_{0\le r \le S}E(r)^{2m}\right) \left(\int_0^S  J(r)^2 dr\right)^m\right].
\end{align*}
Finiteness of the last term is immediate from \eqref{eq:ab1905} and \eqref{eq:bdsi}.  In fact we have that
for every
$m \in \N$ there is a $C_4(m) \in (0, \infty)$, such that
\be
\label{eq:bdv}
\E \left ( \int_0^S V(r)^2 dr\right)^m  \le C_4(m) \left(1 + \E (u_0(\hat X(S)) ^{8m} )  \right).
\ee
Combining \eqref{eq:ab1917}, \eqref{eq:bdv} and the $\tilde \clf^S$ adaptedness of $(U,V)$ we have in particular that
$(U,V) \in  \Hspace{\infty}{\tilde \clf^S} \times \Hspace{2}{\tilde \clf^S}$.
%
By an application of It\^{o}'s formula, we now see that, for $t \in [0, S]$
\be \label{ab850}
U(t)=U(S) + \int_t^S U(r) \downarrow d\tilde Z(r) - \int_t^S V(r) d\tilde W(r).
\ee
For completeness, we give a proof  of the above equality in the Appendix.

Thus we have shown that $(U,V)$ is a solution of \eqref{eqn: TimeReversalLinear} and therefore, as noted earlier
\be
\label{ab757}(u^k_S(t), v^k_S(t)) \equiv (U(S-t), V(S-t)) \ee
defines a solution of \eqref{eqn: HopfCole}.
Representation \eqref{eqn: FeynmanKac} for the solution $u^k_S$ is immediate from the definition of $\tilde Z$ and \eqref{eq:fk2}.
Also, it follows from \eqref{eq:ab1917} and \eqref{eq:bdv} that the solution satisfies \eqref{eqn: H_p}
for any $p \ge 2$.

We now prove uniqueness. Let $(u,v), (u',v') \in \mathscr{H}_S^\infty ( \tilde \clf^S)   \times \mathscr{H}_S^2 ( \tilde \clf^S)$ be two solutions of~\eqref{eqn: TimeReversalLinear}. Then, the differences $\xi=u-u^\prime$, and $\eta=v-v^\prime$ satisfy 
\[
\xi(t)= \int_t^T  \xi(r) \downarrow d\tilde Z(r ) - \int_t^T \eta(r) d\tilde W (r).
\]
Using Lemma \ref{lemma: Ito} (ii) in the Appendix, we get that
\[
\xi(t)^2 + \int_t^S \eta(r)^2 dr= 2 \int_t^S  \xi(r)^2 \downarrow d\tilde Z (r ) - 2\int_t^S \eta(r) \xi(r) d\tilde W (r) + C^k (0 ) \int_t^S \xi(r)^2 dr.
\]
Taking expectations and using Gronwall's inequality it follows that
\[
\E \xi(t)^2 + \E \int_t^S \eta(r)^2 dr =0,
\]
The unique solvability of \eqref{eqn: TimeReversalLinear}, and consequently that of \eqref{eqn: HopfCole} follows.
This completes the proof of the lemma. $\Box$

\begin{proof}[Proof of Theorem~\ref{thm: ExistenceDBSDEMain}]
As in the proof of Lemma \ref{lemma: ExistenceHopfCole}, we will suppress $n,k,S$ from the notation, unless necessary.
Let $(u,v) \in \Hspace{\infty}{\Fc^S} \times \Hspace{2}{\Fc^S}$ be the solution of~\eqref{eqn: HopfCole}.
We will obtain a solution of \eqref{eqn: DBSDE_d} by taking the logarithmic transform of $u$.  We begin by showing that
\be
\label{posalw}
\inf_{0\le t \le S} u(t) > 0, \; a.s.
\ee
Recall from \eqref{eq:uv2} that $u(t) = E(S-t)M(S-t)$, $0 \le t \le S$.  Clearly $\inf_{0\le t \le S} E(S-t) > 0$.  
Also, from the expression of $M(t)$ given in \eqref{eq:basmart} we see that, for each $t$, $M(t) > 0$ a.s., since the random variable inside the conditional expectation
is strictly positive a.s.  Also, since $M$ is continuous, we have that 
%
$\inf_{0\le t \le S} M(t) > 0$ a.s. Combining these observations we see that \eqref{posalw} holds.
 Define
\begin{equation} \label{eqn: InverseHopfCole}
y(t) = - \log u(t),\text{ and } z(t) =  \frac{ v (t) }{ u (t) }.
\end{equation}
We now argue that
$(y,z) \in \Hspace{\infty}{\Fc^S} \times \Hspace{2}{\Fc^S}$. For  $y$ note that
%
\begin{align} \notag
\E \sup_{ t \in [0,S] } y(t)^2 &\leq \E \sup_{ t \in [0,S] } y(t)^2 \ONE_{ u(t) \leq 1 } + \E \sup_{ t \in [0,S] } y(t)^2 \ONE_{ u(t) > 1 } \\
&\equiv T_1 + T_2, \label{eqn: T1T2}
\end{align}
Using the inequality $0<\log \theta < \theta$ for all $\theta >1$,
\begin{align*}
T_2 & \leq \E \sup_{ t \in [0,S] } u(t)^2< \infty.
\end{align*}
Next consider $T_1$.
From \eqref{eq:ab1903}, \eqref{ab757} and an application of Jensen's inequality we have that
\begin{align*}
|y(S-t)| \ONE_{ u(S-t) \leq 1 } &= - \log \parbar{ U(t)\ONE_{ U(t) \leq 1 } + \ONE_{ U(t) > 1 } }  \\
& = -  \log \E\xcmd{ u_0( \hat X(S) ) \frac{E(t)}{E(S)} \ONE_{ U(t) \leq 1} + \ONE_{ U(t) \ge 1}} {\clg_t}\\
& \leq - \E \xcmd{ \log \parbar{ u_0( \hat X(S) ) \frac{E(t)}{E(S)} \ONE_{ U(t) \leq 1}  + \ONE_{ U(t) > 1 } }} {\clg_t}\\
&= - \E \xcmd {\log \parbar{ u_0( \hat X(S) ) \frac{E(t)}{E(S)}    }} {\clg_t}\ONE_{ U(t) \leq 1}.
\end{align*}
Recalling that $u_0 = \exp\{-h_0\}$, we have
$$
|y(S-t)| \ONE_{ u(S-t) \leq 1 } \le \E \xcmd{|h_0(\hat X(S))|} {\clg_t} + \E \xcmd{|\tilde Z(t) - \tilde Z(S)|} {\clg_t} + \frac{1}{2} C^k(0) (S-t).$$
Recalling that $\{\clg_t\}$ is a filtration and that from \eqref{eq:ab1905} and \eqref{eqn: Condition_Initial}
$$ \E \left( \sup_{0\le t \le S} |\tilde Z(t)|^2 + |h_0(\hat X(S))|^2\right ) < \infty, $$
we have by an application of Doob's inequality that for some $C_1 \in (0, \infty)$
$$
T_1 = \E \sup_{ t \in [0,S] } y(t)^2 \ONE_{ u(t) \leq 1 } = \E \sup_{ t \in [0,S] } y(S-t)^2 \ONE_{ u(S-t) \leq 1 } < \infty .$$
Using the above estimates on $T_1$ and $T_2$ in \eqref{eqn: T1T2} we see that $y \in \Hspace{\infty}{\Fc^S}$.

We now consider $z$. Let $p\geq 2$ and $q$ be such that $p^{-1} +
q^{-1}=1$. Then using Holder's inequality
\begin{align} \notag
\E \int_0^S z(r)^2 dr &= \E \int_0^S \parbar{ \frac{v(r)}{u(r)}}^2 dr \\ \notag
& \le  \E  \sup_{ t \in [0,S] } u(t)^{ -2 } \int_0^S v(r)^2dr \\
& \leq \parbar{\E  \sup_{ t \in [0,S] } u(t)^{ -2p }}^{p^{-1} } \parbar{ \E \parbar{ \int_0^S v(r)^2dr }^q }^{q^{-1}}.  \label{eqn: VInt_1}
\end{align}
From \eqref{eq:ab1903} and Jensen's inequality
\begin{align} \notag
U(t)^{-2p} &= \left (\E \xcmd{U(t)}{\clg_t}\right)^{-2p} \\ \notag
& =  \left (\E \xcmd{u_0(\hat X(S)) \frac{E(t)}{E(S)}}{\clg_t}\right)^{-2p} \\
& \leq \E \xcmd{\left(u_0(\hat X(S))\right)^{-2p} \frac{E(S)^{2p}}{E(t)^{2p}}}{\clg_t}.   \label{eqn: ab907}
\end{align}
Recalling \eqref{ab757}, we have that
\begin{align} \notag
\E \sup_{0\le t \le S} u(t)^{-2p} &= \E \sup_{0\le t \le S} U(t)^{-2p} \\ 
& \leq \E \sup_{0\le t \le S} \E\xcmd{\left(u_0(\hat X(S))\right)^{-2p}E(S)^{2p}\sup_{0\le r \le S} E(r)^{-2p}}{\clg_t}.   \label{eqn: ab911}
\end{align}
Also, from \eqref{eq:ab1905} and \eqref{eqn: Condition_Initial}
$$
\E\left [\left(u_0(\hat X(S))\right)^{-4p}E(S)^{4p}\sup_{0\le r \le
S} E(r)^{-4p}\right] < \infty .$$ Since $\{\clg_t\}$ is a
filtration, we have that the conditional expectation
in \eqref{eqn: ab911} is a martingale and so by Doob's maximal inequality it
follows that $$\E \sup_{0\le t \le S} u(t)^{-2p} < \infty.$$ Combining
this estimate with \eqref{eq:bdv}, \eqref{eqn: VInt_1} and recalling
\eqref{ab757}, we have that $z \in  \Hspace{2}{\Fc^S}$.

To finish the proof of existence of solutions, we now verify that
$(y,z)$ defined in \eqref{eqn: InverseHopfCole} satisfy~\eqref{eqn: DBSDE_d}.
We will apply Lemma~\ref{lemma: Ito} (i)  with $\alpha = u, \beta =0, \gamma = -u, \delta = v$, and $\phi(x) = -\log(x)$.  Note that although $\phi$ is only $C^2$ on $(0, \infty)$,  ~\eqref{posalw} guarantees the applicability of It\^{o}'s formula.
Representation~\eqref{eqn: HopfCole}, and Lemma~\ref{lemma: Ito}  imply that
\begin{align*}
y(t) &=y(0) + \int_0^t \frac{u(r)}{u(r)} dZ(r) - \int_0^t \frac{v(r)}{u(r)} \downarrow dW(r)+ \int_0^t \frac{u(r)^2}{2 u(r)^2} C^k (0) dr - \int_0^t  \frac{v(r)^2}{2 u(r)^2}  dr   \\
&= y(0) + Z(t) -  \int_0^t \frac{v(r)}{u(r)} \downarrow dW(r) - \frac{1}{2} \int_0^t \parbar{ \parbar{ \frac{v(r)}{u(r)} }^2 - C^k(0)  }dr.
\end{align*}
From the fact that $z(t)=v(t)/u(t)$, and $y(0)=h(X^S(0,x) )$ we see that this equation is the same as~\eqref{eqn: DBSDE_d}. This completes the proof of existence. 

We now prove uniqueness. Suppose $(y,z), (\tilde y, \tilde z) \in \mathscr{H}_S^\infty (\Fc^S) \times \mathscr{H}_S^2 ( \Fc^S )$ are two solutions
of \eqref{eqn: DBSDE_d}.  Let $(\bar y, \bar z) = (y-\tilde y, z - \tilde z)$.  For $M \in (0, \infty)$, define
$\psi_M: \R \times \R \to [-M, M]$ as $\psi_M(a,b) = \frac{1}{2}(2a-b)\ONE_{|2a-b| \le M}$.  Let $\varphi_M(r) = \psi_M(z(r), \bar z(r))$, $r\in [0,S]$ and let
$y_M$ be a continuous process defined as
\be \label{ab:912}y_M(t) = - \int_0^t \bar z(r) \varphi_M( r ) dr - \int_0^t \bar z(r) \downarrow dW(r).\ee
We will now show that
\be
\label{eq:ab800}
y_M(t) = 0, \mbox { a.s.  for all } t \in [0, S] \mbox{ and } M \in (0, \infty).
\ee
Note that if \eqref{eq:ab800} holds, we have on sending $M \to \infty$, and observing that $y_M(t) \to \bar y(t)$ in probability, for every $t \in [0, S]$, that
$y$ and $\tilde y$ are indistinguishable. Moreover, an application of It\^{o}'s formula (see Lemma \ref{lemma: Ito}(i)) shows that
$$
y_M^2(t) = - 2\int_0^t y_M(r)\bar z(r) \varphi_M( r ) dr - 2\int_0^t y_M(r)\bar z(r) \downarrow dW(r) - \int_0^t \bar z^2(r) dr$$
and so if \eqref{eq:ab800} holds, we have that $z(t) = \tilde z(t)$, a.e. $t \in [0, S]$, a.s.  Combining the above observations we see that in order to prove uniqueness, it suffices to verify \eqref{eq:ab800}.

From Tanaka's formula (cf. Theorem IV.68 in \cite{Protter}) it follows that
\begin{align*}
\parbar{ y_M(t) }_+ &= - \int_0^t \ONE_{ \{y_M(r)>0\} } \bar z(r) \varphi_M( r ) dr \\
&- \int_0^t \ONE_{ \{y_M(r)>0\} } \bar z(r) \downarrow dW(r) - \frac{1}{2}L^0(t),
\end{align*}
where $y_+ = \max \xbr{y,0}$, and $L^0$ is the local time at $0$ process for $Y_M$ (see Chapter IV of \cite{Protter}).
In particular, $L^0$ is non-decreasing, non-negative process such that
\be
\label{eq:ab815}
\int_{[0,\infty)} \ONE_{\{y_M(t) > 0\}} dL^0(t) = 0.
\ee
We remark that the cited theorem establishes the above formula for equations with forward stochastic integrals, however the version with backward integrals used
here follows by straightforward modifications of the proof.

Define for  $n \in \N$, $\xi_n: [0, \infty) \to [0, \infty)$ as
\[
\xi_{n} (u) = ( u \wedge n )^2 + 2n ( u - n )_+, \quad u \in [0,\infty).
\]
Then $\xi_n$ is a $C^1$- convex function with
\be
\label{eq: ab819}
\xi'_n(u) = 2(u\wedge n), \; u \in [0, \infty).
\ee
By Meyer-It\^{o} formula (cf. Theorem IV.70 in \cite{Protter})
\begin{align} \label{eq:ab843}
\xi_{n} ( (y_M (t))_+ )
&= - \int_0^{t } \xi'_n((y_M(s))_+)\ONE_{\{y_M(s) > 0\}} \bar z(r) \varphi_M(r) dr  \\ \notag
&- \int_0^{t } \xi'_n((y_M(s))_+)\ONE_{\{y_M(s) > 0\}} \bar z(r) \downarrow dW(r) \\ \notag
& - \frac{1}{2} \int_0^{t }\xi'_n((y_M(s))_+) dL^0(s) - \frac{1}{2} \int_{-\infty}^{\infty} L^a(t) \mu(da).
\end{align}
where $L^a$ is the local time process of $(y_M)_+$ at level $a$ and $\mu$ represents the second derivative of $\xi_n$ in the generalized function sense.  Since $\xi_n$ is convex, $\mu$ is a (non-negative) measure and in fact equals
$$\mu(da) = 2\ONE_{[0,n]}(a) da.$$
Thus (cf. Corollary IV.1 of \cite{Protter})
$$
\int_{-\infty}^{\infty} L^a(t) \mu(da) = 2 \int_0^n L^a(t) da = 2 \int_0^t \ONE_{(0,n]}((y_M(s))_+) \bar z^2(s) ds. $$
Combining this with the fact that the third term on the right side of \eqref{eq:ab843} is zero, we have from \eqref{eq: ab819} that
\begin{align} \label{eq:ab905}
\xi_{n} ( y_M (t)_+ )
&+  \int_0^t  \ONE_{(0,n]}((y_M(r))_+)  \bar z^2(r) dr \\ \notag
&= - 2\int_0^{t }\ONE_{(0,n]}((y_M(r))_+) ( y_M (r) )_+ \bar z(r) \varphi_M( r ) dr  \\ \notag
& \quad-2n \int_0^t\ONE_{(n,\infty)}((y_M(r))_+) \bar z(r) \varphi_M( r ) dr \\ \notag
& \quad -  \int_0^t\ONE_{ \{y_M(r)>0\} }\xi_{n}^\prime ( (y_M (r))_+ ) \bar z(r) \downarrow dW(r).
\end{align}
Using Young's inequality we have that, for any $\alpha > 0$,
\begin{align} \notag
\int_0^{t }\ONE_{(0,n]}((y_M(r))_+) ( y_M (r) )_+ |\bar z(r)|\; |\varphi_M( r )| dr
& \le  \frac{\alpha}{2} \int_0^{t } \ONE_{(0,n]}((y_M(r))_+) |\bar z(r)|^2 dr \\ \notag
&+  \frac{1}{2\alpha} \int_0^{t }( y_M (r) )_+^2 |\varphi_M( r )|^2 dr.
\end{align}
Using the above estimate with $\alpha < 1$ in \eqref{eq:ab905}, we have
\begin{align} \label{ab922a}
\xi_{n} ( y_M (t)_+ ) &\le \frac{M^2}{\alpha} \int_0^{t }( y_M (r) )_+^2 dr \\ \notag
&+ 2nM \int_0^t\ONE_{(n,\infty)}((y_M(r))_+) |\bar z(r)|  dr\\ \notag
&-\int_0^t\ONE_{ \{y_M(r)>0 \}}\xi_{n}^\prime ( (y_M (r))_+ ) \bar z(r) \downarrow dW(r).
\end{align}
Next, from  \eqref{ab:912}, using that $|\varphi_M(r)| \le M$ and
Doob's inequality, we have \be \label{ab919} \E \sup_{ t \in [0,T]}
y_M^2(t) \leq 2M^2S\E \int_0^S \bar z^2(r) dr + 8\E \int_0^S \bar
z^2(r) dr \equiv C_1  < \infty . \ee Let
\[
\tau_{M,n} = \inf \xbr{ t \in [0,S]: y_M(t) \geq n  }, \quad n \in \N,
\]
where infimum over an empty set, by convention, is taken to be $S$.  Then
\begin{align} \label{ab922}
n \E \int_0^t\ONE_{(n,\infty)}((y_M(r))_+) |\bar z(r)|  dr
&\le n\E \ONE_{\{\tau_{M,n}< S\}}\int_{\tau_{M,n}\wedge t}^t |\bar z(r)|  dr\\ \notag
&\le n \left ( \Pp(\sup_{ t \in [0,S]} y_M(t) \ge n)\right)^{1/2} \left (\E \left(\int_{\tau_{M,n}\wedge t}^t \bar z(r) dr\right)^2\right)^{1/2} \\ \notag
&\le C_1^{1/2}\left (\E \left [(t-\tau_{M,n}\wedge t)\int_{0}^S \bar z^2(r) dr\right]\right)^{1/2},
\end{align}
where the third inequality is a consequence of \eqref{ab919}.
Since $(t-\tau_{M,n}\wedge t)$ converges to $0$ as $n \to \infty$ and $\E\int_{0}^S \bar z^2(r) dr < \infty$, we have
that the expression on the last line of the above display converges to $0$ as $n\to \infty$.  Thus we have shown that
\be
\label{ab:938}
\lim_{n\to \infty} n \E \int_0^t\ONE_{(n,\infty)}((y_M(r))_+) |\bar z(r)|  dr = 0 .
\ee
Taking expectations in \eqref{ab922a} and noting that since $\xi_n^\prime$ is bounded, the expectation of the third term on the right side of \eqref{ab922a} is zero, we have
$$
\limsup_{n\to \infty} \E \xi_{n} ( y_M (t)_+ ) \le \frac{M^2}{\alpha} \int_0^t \E ( y_M (s)_+ )^2 ds.
$$
Finally, noting that $\xi_n(u) \to u^2$ as $n \to \infty$, for all $u \in [0, \infty)$, we have by Fatou's lemma that
$$
 \E ( y_M (t)_+ )^2 \le \frac{M^2}{\alpha} \int_0^t \E ( y_M (s)_+ )^2 ds.
$$
Gronwall's lemma now yields that $(y_M(t))_+ = 0$ for all $t \in [0, S]$. A similar argument shows that
$(y_M(t))_-$ and consequently \eqref{eq:ab800} follows.
As argued earlier, this proves the desired uniqueness.
 \end{proof}

\section{Proof of Theorem~\ref{thm: asympink} } \label{sec: PfThm3}
Fix $0 \le t \le S \le T$.  The representation in Lemma~\ref{lemma: ExistenceHopfCole} and \eqref{eqn: InverseHopfCole} give
\begin{align} \notag
y_S^k(t,x) &= - \log \E \xcmd{  e^{-h_0( X^S (0,x) ) } \exp\{ - Z^k (t,x) -\frac{1}{2}C^k(0)t \} }{\Fc^S_t}  \\
& = - \log \E \xcmd{ \E \xcmd{ e^{-h_0( X^S (0,x) ) } \exp\{ - Z^k (t,x)-\frac{1}{2}C^k(0)t \} } { \Fc^S_t\vee \sigma\{W(t)\} } } {\Fc^S_t}. 
\label{eqn: ConditionalExpectation} 
\end{align}
Define a $C([0,t]:\R)$ valued random variable $X^{S,t}$ as
$$X^{S,t}(r) = X^S(r,x), \; r \in [0, t]$$
and a $C([0,S]: \R^{\infty})$ valued random variable $\beta$ as
$$ \beta(r) = (\beta_m(r))_{m\ge 1}, \; r \in [0, S].$$
Then there is a measurable map
$$\Psi: C([0,t]:\R) \times C([0,S]: \R^{\infty}) \to \R_+$$
such that
$$\Psi(X^{S,t},\beta) = \exp\{ - Z^k (t,x) -\frac{1}{2}C^k(0)t \}.$$
In fact one has the following characterization of $\Psi$.  For $\omega \in C([0,t]: \R)$ define
$$M_{\omega}^k(t) = \sum_{m \in \N} \int_0^t \langle \zz^k_{\omega(r)}, \gamma_m \rangle d\beta_m(r).$$
Then $\Psi$ satisfies
$$\Psi(\omega, \beta) = \exp \{ -M_{\omega}^k(t) - \frac{1}{2}C^k(0) t \}, \mbox{ for all } \omega \in C([0,t]: \R), \; a.s.$$
Let $\Pp^{\mu,\nu}_t$ denote the Brownian bridge measure on $C([0,t]: \R)$ with starting point $\mu$ and ending point $\nu$.  Define
$\Psi_0: [0,S]\times \R \times \R \times C([0,S]: \R^{\infty}) \to \R_+$ as
$$
\Psi_0(t,\mu,\nu, \vartheta) = \int_{C([0,t]:\R)}\Psi(\omega,\vartheta) d\Pp^{\mu,\nu}_t(\omega).$$
In particular
\begin{align*}
\Psi_0(t,\mu,\nu, \beta)	=& \int_{C([0,t]:\R)}\Psi(\omega,\beta) d\Pp^{\mu,\nu}_t(\omega)\\
=& \int_{C([0,t]:\R)} \exp \{ -M_{\omega}^k(t) - \frac{1}{2}C^k(0) t \} d\Pp^{\mu,\nu}_t(\omega)\\
\equiv & \E^{\mu,\nu}_t \left [\exp \{ -M_{\bullet}^k(t) - \frac{1}{2}C^k(0) t \} \right].
\end{align*}
Next, using the independence of $W(t)$ and $\Fc^S_t$ we have
\begin{align*}
\E \xcmd{ \exp\{ - Z^k (t,x)-\frac{1}{2}C^k(0)t \} } { \Fc^S_t\vee \sigma\{W(t)\}}	
	=& \E \xcmd{ \Psi(X^{S,t},\beta) } { \Fc^S_t\vee \sigma\{W(t)\}}\\
	=& \Psi_0(t,\gamma + W_t , \gamma, \beta)	\\
	=& \E^{\gamma + W_t,\gamma}_t \left [\exp \{ -M_{\bullet}^k(t) - \frac{1}{2}C^k(0) t \} \right],
\end{align*}
where $\gamma = x+ W(S) - W(t)$.
Therefore
\begin{align*}
&\E \xcmd{  e^{-h_0( X^S (0,x) ) } \exp\{ - Z^k (t,x) -\frac{1}{2}C^k(0)t \} }{\Fc^S_t}\\
=& \E  	\xcmd{e^{-h_0( \gamma + W(t) ) }\E^{\gamma + W(t),\gamma}_t \left [\exp \{ -M_{\bullet}^k(t) - \frac{1}{2}C^k(0) t \} \right]}{\Fc^S_t} \\
=& \int_{\R} e^{-h(y)}G_t(\gamma - y) \E^{y,\gamma}_t \left [\exp \{ -M_{\bullet}^k(t) - \frac{1}{2}C^k(0) t \} \right] dy,
\end{align*}
where $G_t$ is the standard Heat Kernel.  The last expression is seen from  expression (2.17) of~\cite{BertiniCancrini} to be same as $\psi^k_t(\gamma)$,
where $\psi^k_t$ is the solution of the regularized stochastic heat equation.
$$\psi^k_t(x) = G_t\star \psi_0(x) + \int_0^t \langle G_{t-s}\star \psi_s^{k}, dB_s^k \rangle  .$$
(See Section 2.2 of \cite{BertiniCancrini}.)
Therefore 
$$y^k_S(t,x) = -\log \psi^k_t(x+ W(S)- W(t)), \; 0 \le t \le S \le T.$$
In particular
$$y^k_S(S,x) = -\log \psi^k_S(x), \; 0  \le S \le T.$$
The result now follows from Theorem 2.2 of \cite{BertiniCancrini}. 
$\Box$

\section{Appendix.} \label{sec: StochCalculus}
In this section we collect some basic results on forward-backward stochastic
integrals that are used at various places in this work.  Most of the
statements follow by minor modifications of classical results (eg.
\cite{PengPardoux}) and thus only partial sketches are provided.
Throughout this section we will fix $S \in (0, \infty)$, $x \in \R$ and $k \in \mathbb{N}$. As previously, we will suppress
$k$ and $z$ from the notation when writing $Z^k(t,x)$, $\tilde Z^k(t,x)$ etc.

Define $\sigma$-fields
$$
\clg_r^S = \clf_{r,S}^W \vee \clf_S^B, \; \clh_r^S = \clf_{0,S}^W \vee \clf_r^B, \; \tilde \clg_r^S =  \clf_S^{\tilde W} \vee \clf_{r,S}^{\tilde B},
\; \tilde \clh_r^S = \clf_r^{\tilde W} \vee \clf_{0,S}^{\tilde B}.$$
Abusing terminology, we say a stochastic process $\{A(r)\}_{0\le r \le S}$ is adapted to a collection of $\sigma$-fields $\{\mathcal{U}_r\}_{0\le r \le S}$
if $A(r)$ is $\mathcal{U}_r$ measurable for every $r \in [0, S]$.
For such a family of $\sigma$-fields we denote by $\cla^2(\mathcal{U})$ the collection of all adapted processes $\{A(r)\}$ such that
$\int_0^S |A(r)|^2 dr < \infty$ a.s.  Then the following stochastic integrals are well defined:
\begin{align*}
	\int_0^t A(r) \downarrow dW(r) , \; A \in \cla^2(\clg^S); \;& \int_0^t A(r)  dZ(r) , \; A \in \cla^2(\clh^S), \\
	\int_0^t A(r) \downarrow d\tilde Z(r) , \; A \in \cla^2(\tilde \clg^S); \;& \int_0^t A(r)  d\tilde W(r) , \; A \in \cla^2(\tilde \clh^S), t \in [0, S].
\end{align*}
Indeed, consider for example the first stochastic integral.  If $A$ is of the form  $A(r) = \zeta
\ONE_{[a,b)}(r)$, where $\zeta$ is a bounded $\clg^S_b$ measurable random variable
and $0\le a<b\le S$, then
$$\int_0^t A(r) \downarrow dW(r) \equiv \zeta \left (W(b\wedge t) - W(a\wedge t)\right ).$$
The integral is extended to linear combinations of such elementary processes by linearity, and then by denseness and $L^2$-isometry to
all $A \in \cla^2(\clg^S)$ satisfying $\E\int_0^S |A(r)|^2 dr < \infty$; and finally by localization to all $A \in \cla^2(\clg^S)$.


The following elementary lemma gives a basic relation between forward and backward integrals.
\begin{lemma} \label{lemma: FBRelationAux}
Let $K \in \cla^2(\clh^S)$ and $H \in \cla^2(\tilde \clh^S)$.
Let
$$
\tilde K(t) = K(S-t), \;\; \tilde H(t) = H(S-t), \;\; t \in [0, S].$$
Then
$\tilde K \in \cla^2(\tilde \clg^S)$ and $\tilde H \in \cla^2(\clg^S)$.
Furthermore, for $t \in [0, S]$,
\begin{align*}
\int_0^t H(r) d\tilde W(r) =& - \int_{ S-t }^S \tilde H( r ) \downarrow d W(r),\\
\int_0^t K(r) d Z(r) =& - \int_{ S-t }^S \tilde K(  r ) \downarrow d\tilde Z(r)
\end{align*}
\end{lemma}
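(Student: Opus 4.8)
The plan is to reduce everything to the two basic symmetries of time reversal on $[0,S]$: reflecting the time variable swaps the forward filtrations of $W$ (resp.\ $B$) with the backward filtrations of $\tilde W$ (resp.\ $\tilde B$), and it converts a forward It\^o integral into a backward one and conversely. Once these two facts are isolated, the lemma becomes bookkeeping together with a density/localization argument.

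First I would dispose of the adaptedness and integrability claims. Since $W(0)=0$ we have $\clf_{0,S}^W = \sigma\{W(s): s\in[0,S]\}\vee\cln$, and because $\tilde W(s) = W(S-s)-W(S)$ this equals $\clf_S^{\tilde W}$ (this is \eqref{eq:1233a} at $t=S$); combined with \eqref{eq:1233}, which gives $\clf_{S-r}^B = \clf_{r,S}^{\tilde B}$, we obtain $\clh_{S-r}^S = \clf_{0,S}^W\vee\clf_{S-r}^B = \clf_S^{\tilde W}\vee\clf_{r,S}^{\tilde B} = \tilde\clg_r^S$, so $\tilde K(r)=K(S-r)$ is $\tilde\clg_r^S$-measurable for every $r$. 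Symmetrically, \eqref{eq:1233a} gives $\clf_{S-r}^{\tilde W}=\clf_{r,S}^W$ and \eqref{eq:1233} at $t=0$ gives $\clf_{0,S}^{\tilde B}=\clf_S^B$, whence $\tilde\clh_{S-r}^S = \clf_{r,S}^W\vee\clf_S^B = \clg_r^S$ and $\tilde H$ is $\clg^S$-adapted. The pathwise bound $\int_0^S|\tilde K(r)|^2\,dr = \int_0^S|K(r)|^2\,dr<\infty$ (and likewise for $\tilde H$) is immediate from the substitution $r\mapsto S-r$. Thus $\tilde K\in\cla^2(\tilde\clg^S)$ and $\tilde H\in\cla^2(\clg^S)$.

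Next I would prove the integral identities; by the structural symmetry between the pairs $(W,\tilde W)$ and $(Z,\tilde Z)$ --- recall $\tilde Z(t)=\int_0^t\langle \zz^k_{\hat X(r)},d\tilde B_r\rangle$ from \eqref{eq:timrev} --- it suffices to treat the first one. I would start from an elementary process $H(r)=\zeta\ONE_{[a,b)}(r)$ with $0\le a<b\le S$ and $\zeta$ bounded and $\tilde\clh_a^S$-measurable, so that $\int_0^t H(r)\,d\tilde W(r) = \zeta(\tilde W(b\wedge t)-\tilde W(a\wedge t))$. Then $\tilde H(r) = \zeta\ONE_{(S-b,\,S-a]}(r)$, which agrees Lebesgue-a.e.\ with an elementary process of the form used to define the backward integral (the $[\cdot,\cdot)$ versus $(\cdot,\cdot]$ convention is immaterial, endpoints having measure zero), and $\zeta$ is $\clg_{S-a}^S$-measurable by the identification of the previous paragraph, as required of the coefficient anchored at the right endpoint $S-a$. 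Writing $\tilde W(s)=W(S-s)-W(S)$ and computing $-\int_{S-t}^S\tilde H(r)\downarrow dW(r)$ directly --- splitting according to whether $t\le a$, $a<t\le b$, or $t>b$, and using $\int_{S-t}^S(\cdot)\downarrow dW = \int_0^S(\cdot)\downarrow dW - \int_0^{S-t}(\cdot)\downarrow dW$ --- one gets $\zeta(W(S-b\wedge t)-W(S-a\wedge t)) = \zeta(\tilde W(b\wedge t)-\tilde W(a\wedge t))$, which matches the left-hand side. Linearity of both integrals extends the identity to finite linear combinations of such processes.

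Finally I would pass to general $H\in\cla^2(\tilde\clh^S)$: for $H$ with $\E\int_0^S H(r)^2\,dr<\infty$ pick elementary $H_n\to H$ in $L^2(dr\otimes\Pp)$; since $r\mapsto S-r$ is an $L^2(dr\otimes\Pp)$-isometry, $\tilde H_n\to\tilde H$ there as well, and the It\^o isometries for $d\tilde W$ and for $\downarrow dW$ let me pass to the limit on both sides of the identity established for each $H_n$; the general case follows by the usual localization along stopping times reducing to the square-integrable case. I expect no genuine obstacle: the content is the two time-reversal symmetries, and the only points needing care are (i) the precise identification of the four families of $\sigma$-fields (in their $\sigma\{h_0\}$-free versions) under reflection, and (ii) checking that the reversal map truly intertwines the elementary-process / isometry / localization constructions of the forward and backward integrals, so that the identity survives the limiting procedure.
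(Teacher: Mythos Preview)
Your proposal is correct and follows essentially the same route as the paper: the adaptedness claim is reduced to the identifications \eqref{eq:1233a} and \eqref{eq:1233}, the first integral identity is checked by hand on an elementary integrand $H=\zeta\ONE_{\text{interval}}$ (the paper uses $\ONE_{(a,b]}$, you use $\ONE_{[a,b)}$, and you correctly note the endpoint convention is immaterial), and the general case is obtained by linearity, the $L^2$-isometry, and localization. The only cosmetic difference is that the paper writes the backward integral on the elementary process directly as $\zeta\bigl(W((S-a)\vee(S-t))-W((S-b)\vee(S-t))\bigr)$ rather than via your case split, but this is the same computation.
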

\begin{proof}
The first statement in the lemma is an immediate consequence of \eqref{eq:1233a} and \eqref{eq:1233}.
  Of the two equalities in the above display, we only prove the first one. The proof of the second identity follows by a similar argument. Consider first the case where $H(t) =  \zeta \ONE_{(a,b]}(t)$, where $\zeta$ is a  bounded $\tilde \clh^S_a$ measurable random variable, and $0\le a<b\le S$.  In that case, note that 
$$
\tilde H(r) = H(S-r)= \zeta \ONE_{ [a,b) } (S-r) = \zeta \ONE_{ [S-b,S-a) } (r),$$ and,  
\begin{align*}
\int_{ S-t }^S \tilde H&(  r ) \downarrow d W(r) \\
&= \zeta \left[ \parbar{    W( ( S-a) \vee (S-t) )   }   - \parbar{    W( ( S-b) \vee (S-t) )    }   \right] \\
&= \zeta \left[ \parbar{    W( ( S-a) \vee (S-t) )  -  W(S)  }   - \parbar{    W( ( S-b) \vee (S-t) )  - W(S)  }   \right] \\
&= \zeta \parbar { \tilde W(t\wedge a) - \tilde W(t\wedge b)} \\
&= -\int_0^t H(r)d\tilde W(r).
\end{align*}
The general case follows by linearity, denseness (along with $L^2$ isometry) and a localization argument.  Details are omitted.
\end{proof}
As an immediate consequence of the lemma we have the following corollary.
\begin{corollary} \label{lemma: FBRelation}
 A pair of processes $(\hat u, \hat v)  \in
\mathscr{H}_S^\infty ( \tilde \clf^S)   \times \mathscr{H}_S^2 (
\tilde \clf^S )$  solves
\eqref{eqn: TimeReversalLinear} if and only if $(u,v)$, defined as $(u(t), v(t)) = (u(S-t), v(S-t))$, $t \in [0, S]$, solves \eqref{eqn: HopfCole}.
\end{corollary}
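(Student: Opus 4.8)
The plan is to read the corollary off Lemma~\ref{lemma: FBRelationAux} together with the $\sigma$-field identities \eqref{eq:1233a}, \eqref{eq:1233} and \eqref{eqn: TildeFS}. First I would record that the time-reversal map $\{A(r)\}_{r\in[0,S]}\mapsto\{A(S-r)\}_{r\in[0,S]}$ preserves continuity of paths and the functionals $\E\sup_{t}|\cdot|^{2}$ and $\E\int_{0}^{S}|\cdot|^{2}\,dr$ defining $\mathscr{H}^{\infty}_{S}$ and $\mathscr{H}^{2}_{S}$, and that, by \eqref{eqn: TildeFS} ($\tilde\clf^{S}_{t}=\clf^{S}_{S-t}$), it carries the adaptedness requirement for $\tilde\clf^{S}$ to the one for $\Fc^{S}$. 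Hence $(\hat u,\hat v)\in\mathscr{H}^{\infty}_{S}(\tilde\clf^{S})\times\mathscr{H}^{2}_{S}(\tilde\clf^{S})$ if and only if its time reversal $(u,v)$, $(u(t),v(t))=(\hat u(S-t),\hat v(S-t))$, lies in $\mathscr{H}^{\infty}_{S}(\Fc^{S})\times\mathscr{H}^{2}_{S}(\Fc^{S})$, so it suffices to prove the equivalence of \eqref{eqn: TimeReversalLinear} and \eqref{eqn: HopfCole} for such a corresponding pair.

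For that I would evaluate \eqref{eqn: TimeReversalLinear} at the time $S-s$, $s\in[0,S]$. The left side becomes $u(s)$ and, since $\hat X(S)=x-\tilde W(S)=x+W(S)=X^{S}(0,x)$, the terminal term is $u_{0}(X^{S}(0,x))$, matching the forcing in \eqref{eqn: HopfCole}. The two integrals are then converted by Lemma~\ref{lemma: FBRelationAux}: its second identity, with $\tilde K=\hat u$ (so $K(r)=\hat u(S-r)=u(r)$) and upper limit $s$, gives $\int_{S-s}^{S}\hat u(r)\downarrow d\tilde Z(r)=-\int_{0}^{s}u(r)\,dZ(r)$; and, writing $\int_{S-s}^{S}=\int_{0}^{S}-\int_{0}^{S-s}$ and applying the first identity with $H=\hat v$ (so $\tilde H(r)=v(r)$) at the upper limits $S$ and $S-s$, one gets $\int_{S-s}^{S}\hat v(r)\,d\tilde W(r)=-\int_{0}^{s}v(r)\downarrow dW(r)$. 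Substituting these two relations into \eqref{eqn: TimeReversalLinear} at time $S-s$ produces exactly \eqref{eqn: HopfCole} at time $s$. The converse runs the same computation in the opposite direction, using that time reversal is an involution and that both identities of Lemma~\ref{lemma: FBRelationAux} are equalities valid either way.

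The step I expect to require the most care --- and the only genuine obstacle --- is checking the adaptedness hypotheses needed to invoke Lemma~\ref{lemma: FBRelationAux}, namely that $u=\{\hat u(S-\cdot)\}\in\cla^{2}(\clh^{S})$ and $v=\{\hat v(S-\cdot)\}\in\cla^{2}(\clg^{S})$ (and the analogous statements backwards). Here \eqref{eq:1233a} ($\clf^{\tilde W}_{t}=\clf^{W}_{S-t,S}$) and \eqref{eq:1233} ($\clf^{\tilde B}_{t,S}=\clf^{B}_{S-t}$) are what is used: they give $\tilde\clf^{S}_{S-r}=\clf^{W}_{r,S}\vee\clf^{B}_{r}\vee\sigma\{h_{0}\}$, which is contained both in $\clf^{W}_{0,S}\vee\clf^{B}_{r}\vee\sigma\{h_{0}\}$ and in $\clf^{W}_{r,S}\vee\clf^{B}_{S}\vee\sigma\{h_{0}\}$. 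Since $h_{0}$ is independent of $(B,W)$, enlarging the integrator filtrations $\clh^{S}$ and $\clg^{S}$ by $\sigma\{h_{0}\}$ affects neither the admissibility of the integrands nor the values of the integrals, so Lemma~\ref{lemma: FBRelationAux} applies verbatim. With this in hand the rest is mechanical and I would keep it brief.
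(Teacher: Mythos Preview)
Your proposal is correct and follows exactly the approach the paper intends: the paper's own proof is the single sentence ``The proof is immediate from Lemma~\ref{lemma: FBRelationAux},'' and you have simply unpacked that sentence. Your attention to the adaptedness hypotheses of Lemma~\ref{lemma: FBRelationAux} and the harmless enlargement by $\sigma\{h_0\}$ is a detail the paper leaves implicit, but it is the right point to check and your handling of it is sound.
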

\begin{proof}
The proof is immediate from Lemma \ref{lemma: FBRelationAux}.
\end{proof}
The following elementary lemma will be  used in the proof of \eqref{ab850}.
\begin{lemma} \label{def: BackwardAdapted}
	Let $\varphi$ be a $C^1$ function on $\R$ and $\psi: [0,S] \to \R$ be a continuous function.
	Suppose that for all $z, z' \in \R$, $\varphi(z) = \varphi_1(z-z')\varphi_2(z')$ for some continuous functions $\varphi_1, \varphi_2$.
	Then for all $t \in [0,S]$,
	\begin{align} \notag \int_0^t \varphi(\tilde Z(r)) \psi(r) d\tilde Z(r) =& \varphi_2(\tilde Z(T))\int_0^t \varphi_1(\tilde Z(r) - \tilde Z(T)) \psi(r) \downarrow d\tilde Z(r) \\
		-& C^k(0) \int_0^t \varphi'(\tilde Z(r)) \psi(r) dr.\label{ab648}\end{align}
\end{lemma}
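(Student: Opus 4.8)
The plan is to reduce to the case $\psi\equiv 1$ and then carry out an elementary Riemann‑sum computation, using that by \eqref{eq:timrev} and \eqref{ab824} the process $\tilde Z$ is a continuous martingale (in the natural filtration of the time‑reversed processes) with deterministic quadratic variation $\langle\tilde Z\rangle_r=C^k(0)\,r$, and that $\varphi(\tilde Z(\cdot))$ and $\varphi_1(\tilde Z(\cdot)-\tilde Z(T))$ are continuous adapted, respectively backward‑adapted, processes, so that all three integrals in \eqref{ab648} are well defined. First, if $\varphi_2\equiv 0$ then $\varphi\equiv 0$ and \eqref{ab648} is trivial; otherwise pick $z_0$ with $\varphi_2(z_0)\ne 0$, so that $\varphi_1(w)=\varphi(w+z_0)/\varphi_2(z_0)$ is $C^1$ (and, symmetrically, so is $\varphi_2$), and differentiating the hypothesis $\varphi(z)=\varphi_1(z-z')\varphi_2(z')$ in $z$ gives the pointwise identities $\varphi(z)=\varphi_1(z-z')\varphi_2(z')$ and $\varphi'(z)=\varphi_1'(z-z')\varphi_2(z')$, valid for all $z,z'\in\R$; these will be applied with $z=\tilde Z(r)$ and $z'=\tilde Z(T)$.

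Next I would reduce to $\psi\equiv 1$. For $\psi$ piecewise constant the identity over $[0,t]$ follows by splitting $[0,t]$ at the jumps of $\psi$, applying the $\psi\equiv 1$ identity on each subinterval (obtained from the identity at its two endpoints by the additivity $\int_0^{\beta}-\int_0^{\alpha}=\int_{\alpha}^{\beta}$, which holds for each of the three integral types), and summing. A general continuous $\psi$ is then approximated uniformly on $[0,S]$ by piecewise constant $\psi_n$: the forward It\^o integral $\int_0^t\varphi(\tilde Z(r))\psi_n(r)\,d\tilde Z(r)$ and the backward integral $\int_0^t\varphi_1(\tilde Z(r)-\tilde Z(T))\psi_n(r)\downarrow d\tilde Z(r)$ converge in probability, by the ucp‑continuity of both types of stochastic integral and the uniform convergence of the (continuous) integrands, multiplication by the fixed random variable $\varphi_2(\tilde Z(T))$ preserves the convergence, and $\int_0^t\varphi'(\tilde Z(r))\psi_n(r)\,dr\to\int_0^t\varphi'(\tilde Z(r))\psi(r)\,dr$ pathwise by dominated convergence, so \eqref{ab648} passes to the limit.

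It remains to treat $\psi\equiv 1$. Fix partitions $0=r_0<\dots<r_N=t$ of mesh tending to $0$ and write $\Delta_i=\tilde Z(r_{i+1})-\tilde Z(r_i)$ and $\theta=\tilde Z(T)$. Using $\varphi(\tilde Z(r_i))=\varphi_1(\tilde Z(r_i)-\theta)\varphi_2(\theta)$, the left‑endpoint sums give $\int_0^t\varphi(\tilde Z(r))\,d\tilde Z(r)=\varphi_2(\theta)\lim\sum_i\varphi_1(\tilde Z(r_i)-\theta)\Delta_i$, while the right‑endpoint sums give the backward integral $\int_0^t\varphi_1(\tilde Z(r)-\theta)\downarrow d\tilde Z(r)=\lim\sum_i\varphi_1(\tilde Z(r_{i+1})-\theta)\Delta_i$. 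Subtracting,
\[
\varphi_2(\theta)\int_0^t\varphi_1(\tilde Z(r)-\theta)\downarrow d\tilde Z(r)-\int_0^t\varphi(\tilde Z(r))\,d\tilde Z(r)=\varphi_2(\theta)\lim\sum_i\bigl(\varphi_1(\tilde Z(r_{i+1})-\theta)-\varphi_1(\tilde Z(r_i)-\theta)\bigr)\Delta_i ,
\]
and applying the mean value theorem to $\varphi_1$ on each increment, followed by the standard convergence $\sum_i g(\eta_i)\Delta_i^2\to C^k(0)\int_0^t g(\tilde Z(r)-\theta)\,dr$ in probability (valid for continuous $g$ and $\eta_i$ between $\tilde Z(r_i)-\theta$ and $\tilde Z(r_{i+1})-\theta$), the right‑hand side equals $\varphi_2(\theta)\,C^k(0)\int_0^t\varphi_1'(\tilde Z(r)-\theta)\,dr=C^k(0)\int_0^t\varphi'(\tilde Z(r))\,dr$, which rearranges to \eqref{ab648}. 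The one point requiring care is this last Riemann‑sum convergence: since $\varphi$, hence $\varphi_1$, is only $C^1$, It\^o's formula is not directly available, and one identifies $\lim\sum_i(\varphi_1(\tilde Z(r_{i+1})-\theta)-\varphi_1(\tilde Z(r_i)-\theta))\Delta_i$ with $C^k(0)\int_0^t\varphi_1'(\tilde Z(r)-\theta)\,dr$ exactly as in the quadratic‑variation estimates underlying the classical It\^o formula — which is why, consistent with the level of detail in this appendix, only a sketch is given.
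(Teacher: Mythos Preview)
Your proof is correct and follows essentially the same Riemann--sum argument as the paper: compare left-endpoint (forward) and right-endpoint (backward) sums using the multiplicative hypothesis $\varphi(\tilde Z(r))=\varphi_1(\tilde Z(r)-\tilde Z(T))\,\varphi_2(\tilde Z(T))$, then identify the difference as the quadratic-variation correction $C^k(0)\int_0^t\varphi'(\tilde Z(r))\psi(r)\,dr$. The only difference is that the paper carries the continuous factor $\psi$ directly through the partition sums (evaluating it at $t_{i+1}$), so your preliminary reduction to $\psi\equiv 1$ via piecewise-constant approximation is correct but unnecessary.
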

\begin{proof}
Fix $t \in [0, S]$ and let $\Pi_n = \{0 = t_0^{(n)} < t_1^{(n)} < t_2^{(n)} \cdots < t_k^{(n)} = t\}$ be a partition of $[0,t]$ such that $|\Pi_n| \to 0$ as $n \to \infty$. 
Then (suppressing $n$) letting $\Delta_i \tilde Z = \tilde Z(t_{i+1} ) - \tilde Z(t_{i})$, we see that
%
$\varphi_2(\tilde Z(T))\int_0^t \varphi_1(\tilde Z(r) - \tilde Z(T)) \psi(r) \downarrow d\tilde Z(r)$ is the limit in probability of
\begin{align} \notag
&\varphi_2(\tilde Z(T)) \sum_{i=0}^{k-1} \varphi_1(\tilde Z(t_{i+1})-\tilde Z(T)) \psi(t_{i+1})  \Delta_i \tilde Z \\
&= \sum_{i=0}^{k-1} \varphi(\tilde Z(t_{i+1})) \psi(t_{i+1})  \Delta_i \tilde Z \nonumber\\
& =  \sum_{i=0}^{k-1} \varphi(\tilde Z(t_{i})) \psi(t_{i+1}) \Delta_i \tilde Z \nonumber\\
 & \quad + \sum_{i=0}^{k-1} \parbar{ \varphi(\tilde Z(t_{i+1})) - \varphi(\tilde Z(t_{i})) } \psi(t_{i+1}) \Delta_i \tilde Z. \label{eqn: sumproof}
 \end{align}
 From standard arguments it follows that, in probability, 
 \[
 \lim_{n\to \infty } \sum_{i=0}^{k-1} \parbar{ \varphi(\tilde Z(t_{i+1})) - \varphi(\tilde Z(t_{i})) } \psi(t_{i+1}) \Delta_i \tilde Z = C^k(0) \int_0^t \varphi'(\tilde Z(r)) \psi(r) dr.
 \]
 Likewise, it is easily seen that 
 \[
\lim_{n\to \infty } \sum_{i=0}^{k-1} \varphi(\tilde Z(t_{i})) \psi(t_{i+1}) \Delta_i \tilde Z=\int_0^t \varphi(\tilde Z(r)) \psi(r) d\tilde Z(r),
 \]
 in probability. These two identities  combined with~\eqref{eqn: sumproof} give the result.
\end{proof}
We now present a variation of It\^{o}'s formula that is used in our work.
\begin{lemma} \label{lemma: Ito}
Let $\phi \in C^2 (\R)$.\\
(i) Let  processes $\alpha \in \Hspace{\infty}{\Fc^S}, \beta, \gamma , \delta \in \Hspace{2}{\Fc^S}$ be such that
\[
\alpha(t)= \alpha(0) + \int_0^t \beta(r) dr + \int_0^t \gamma(r) d Z^k (r) + \int_0^t \delta(r) \downarrow dW(r), \quad 0 \leq t \leq T.
\]
Then, for all $t \in [0, S]$
\begin{align*}
\phi( \alpha(t) ) &= \phi( \alpha(0 ) ) + \int_0^t \phi^\prime (\alpha(r) ) \beta(r) dr + \int_0^t \phi^\prime (\alpha(r) ) \gamma(r) dZ^k(r) \\
& \quad + \int_0^t \phi^\prime (\alpha(r) ) \delta(r) \downarrow dW(r)  + \frac{C^k (0 )}{2}  \int_0^t \phi^{\prime \prime} (\alpha(r) ) \gamma(r)^2dr\\
& \quad - \frac{1}{2} \int_0^t \phi^{\prime \prime} (\alpha(r) ) \delta(r)^2 dr.
\end{align*}
(ii) Let  processes $\alpha \in \Hspace{\infty}{\tilde \Fc^S}, \beta, \gamma , \delta \in \Hspace{2}{\tilde \Fc^S}$ be such that
\[
\alpha(t)= \alpha(0) + \int_0^t \beta(r) dr + \int_0^t \gamma(r) \downarrow d \tilde Z^k (r) + \int_0^t \delta(r)  d\tilde W(r), \quad 0 \leq t \leq T.
\]
Then, for all $t \in [0, S]$
\begin{align*}
\phi( \alpha(t) ) &= \phi( \alpha(0 ) ) + \int_0^t \phi^\prime (\alpha(r) ) \beta(r) dr + \int_0^t \phi^\prime (\alpha(r) ) \gamma(r) \downarrow d\tilde Z^k(r) \\
& \quad + \int_0^t \phi^\prime (\alpha(r) ) \delta(r)  d\tilde W(r)  - \frac{C^k (0 )}{2}  \int_0^t \phi^{\prime \prime} (\alpha(r) ) \gamma(r)^2dr\\
& \quad + \frac{1}{2} \int_0^t \phi^{\prime \prime} (\alpha(r) ) \delta(r)^2 dr.
\end{align*}
\end{lemma}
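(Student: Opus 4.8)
The plan is to exploit the time-reversal symmetry between (i) and (ii): it suffices to establish one of the two identities and then transfer it to the other through Lemma~\ref{lemma: FBRelationAux}. I would prove (i). A preliminary localization removes all integrability concerns in the usual way --- with $\tau_N$ the first time (run forward or backward as the statement demands) at which $|\alpha|$ or the running $L^2$-norms of $\beta,\gamma,\delta$ exceed $N$, one has $\tau_N\uparrow S$ since $\alpha$ has continuous paths and $\beta,\gamma,\delta\in\mathscr{H}^2_S$, so it is enough to prove the formula for $\phi$ with $\phi,\phi',\phi''$ bounded and then pass to the limit. I would only indicate this step.

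For part (i) the essential observation concerns the two stochastic integrators. By \eqref{ab824} together with $\|\zz^k_x\|^2 = C^k(0)$, the quadratic variation of $Z^k$ is deterministic:
\[
\langle Z^k\rangle_t \;=\; \sum_{m}\int_0^t \langle \zz^k_{X^S(r,x)},\gamma_m\rangle^2\,dr \;=\; \int_0^t \|\zz^k_{X^S(r,x)}\|^2\,dr \;=\; C^k(0)\,t ,
\]
while $\int_0^{\cdot}\delta(r)\downarrow dW(r)$ is a backward It\^{o} integral against the standard Brownian motion $W$. Thus the equation for $\alpha$ is of the ``backward doubly stochastic'' type treated in~\cite{PengPardoux}, the only new feature being that the forward integrator $Z^k$ carries the bracket $C^k(0)\,dt$ in place of $dt$. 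Applying the It\^{o} formula for such equations then gives the result: the second-order term generated by the forward part $\int\gamma\,dZ^k$ is $\tfrac{C^k(0)}{2}\int\phi''(\alpha)\gamma^2\,dr$ (the usual It\^{o} sign, with $d\langle Z^k\rangle = C^k(0)\,dr$), the one generated by the backward part $\int\delta\downarrow dW$ is $-\tfrac12\int\phi''(\alpha)\delta^2\,dr$ (the opposite, ``anti-It\^{o}'' sign, because the clock runs backwards), and the cross term vanishes because the independence of $B$ and $W$ forces $[Z^k,W]\equiv 0$. Together with the first-order terms these reproduce the displayed identity; if one prefers a self-contained argument, the same conclusion follows from a second-order Taylor expansion of $\sum_i[\phi(\alpha(t_{i+1}))-\phi(\alpha(t_i))]$ along partitions, evaluating $\phi'$ at the left endpoint of each cell for the forward increments and at the right endpoint for the backward ones, and using $\sum_i(\Delta_i Z^k)^2\to C^k(0)\,t$, $\sum_i(\Delta_i W)^2\to t$, $\sum_i\Delta_i Z^k\,\Delta_i W\to 0$ (the last again from $B\perp W$).

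To pass from (i) to (ii) I would reverse time. Given $(\alpha,\beta,\gamma,\delta)$ as in (ii), set $\bar\alpha(t)=\alpha(S-t)$ and likewise $\bar\beta,\bar\gamma,\bar\delta$. By Lemma~\ref{lemma: FBRelationAux} the backward integral $\int\gamma\downarrow d\tilde Z^k$ becomes a forward integral against $Z^k$, the forward integral $\int\delta\,d\tilde W$ becomes a backward integral against $W$, and $\int_{S-t}^S(\cdot)\,dr=\int_0^t(\cdot)(S-r)\,dr$; a short computation then shows that $\bar\alpha$ solves an equation of exactly the form in part (i), with coefficients $(-\bar\beta,\bar\gamma,\bar\delta)$. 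Applying (i) to $\phi(\bar\alpha(\cdot))$ at the times $S$ and $S-t$, subtracting, and converting every integral over $[S-t,S]$ back to one over $[0,t]$ by Lemma~\ref{lemma: FBRelationAux} and the substitution $r\mapsto S-r$, yields the formula of (ii). The sign reversals are bookkept by the interval flip $\int_{S-t}^S\leftrightarrow-\int_0^t$ and, for the stochastic terms, the extra sign carried by Lemma~\ref{lemma: FBRelationAux}; they combine to turn the $+\tfrac{C^k(0)}{2}$ and $-\tfrac12$ of (i) into the $-\tfrac{C^k(0)}{2}$ and $+\tfrac12$ of (ii).

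The main obstacle is part (i). The integrand $\zz^k_{X^S(\cdot,x)}$ of $Z^k$ depends on the future increments $W(S)-W(\cdot)$ of $W$, so $\int\gamma\,dZ^k$ is not an It\^{o} integral against a process adapted to an increasing filtration, and $\{\Fc^S_t\}$ is not a filtration at all --- one is genuinely in the forward--backward (``doubly stochastic'') setting. What makes the second-order calculus go through is the independence of $B$ and $W$, which annihilates the cross-bracket $[Z^k,W]$ and decouples the two quadratic-variation contributions, letting the forward part contribute with the It\^{o} sign (and deterministic bracket $C^k(0)\,dr$, via \eqref{ab824}) and the backward part with the opposite sign, exactly as for the equations of~\cite{PengPardoux}. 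The localization and the time-reversal transfer to (ii) are then routine.
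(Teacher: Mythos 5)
Your proof is essentially correct and, for part (i), takes the same route as the paper: a localization to bounded $\phi''$ followed by a second-order partition/Taylor argument, with the new feature being the disappearance of the mixed quadratic term. The one place you should be a bit more careful is the justification of $\sum_i \Delta_i Z^k\,\Delta_i W \to 0$. You attribute it to $B \perp W$, but $Z^k$ is \emph{not} independent of $W$ --- the integrand $\zz^k_{X^S(\cdot,x)}$ is a function of $W$'s future increments. The paper's argument is the right way to make your intuition rigorous: conditioning on $\clf^B_{t_{i-1}}\vee\clf^W_S$, the process $Z^k$ is a $B$-martingale with deterministic conditional quadratic variation $C^k(0)\,dr$ (by \eqref{ab824}), so the cross-products in $\E\bigl[\sum_i \phi''(\alpha_{i-1})\Delta_iW\Delta_iZ\bigr]^2$ vanish and the diagonal terms sum to $O(|\Pi_n|)$. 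In other words, one conditions on the entire path of $W$ first, and only then uses independence of $B$ from $W$; your phrase ``$[Z^k,W]\equiv 0$'' is exactly the conclusion of this conditioning, not a direct consequence of $B\perp W$.

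Where you differ genuinely from the paper is in part (ii): the paper says it ``follows similarly,'' i.e.\ it reruns the partition argument with the roles of the two integrators exchanged, whereas you derive (ii) from (i) by time reversal via Lemma~\ref{lemma: FBRelationAux}. I checked your bookkeeping: setting $\bar\alpha(s)=\alpha(S-s)$ and using the identities $\int_{S-s}^S \gamma\downarrow d\tilde Z^k = -\int_0^s\bar\gamma\,dZ^k$ and $\int_{S-s}^S \delta\,d\tilde W=-\int_0^s\bar\delta\downarrow dW$ (both obtained from Lemma~\ref{lemma: FBRelationAux} by taking differences over $[0,S]$ and $[0,S-s]$), one indeed finds $\bar\alpha$ solving a type-(i) equation with coefficients $(-\bar\beta,\bar\gamma,\bar\delta)$; applying (i) at times $S$ and $S-t$ and reversing again reproduces the formula in (ii), with the signs of the two second-order terms flipped. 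This transfer is cleaner than re-proving (ii) from scratch, and it makes the symmetry between (i) and (ii) transparent --- it is also consistent with how the paper already uses Lemma~\ref{lemma: FBRelationAux} (namely, to reduce equation \eqref{eqn: HopfCole} to its time-reversed form \eqref{eqn: TimeReversalLinear}). The only thing to verify along the way is the adaptedness hypothesis of Lemma~\ref{lemma: FBRelationAux}: the integrands $\phi'(\bar\alpha)\bar\gamma$, $\phi'(\bar\alpha)\bar\delta$ are $\clf^S_r$-measurable by \eqref{eqn: TildeFS}, hence (modulo the independent $\sigma\{h_0\}$, which is harmless) measurable with respect to the required $\sigma$-fields $\clh^S_r$, $\clg^S_r$.
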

\begin{proof}
	We will only consider (i).  The statement in (ii) follows similarly.  The proof follows using standard arguments (cf. Theorem 3.3.3 in \cite{KaratzasBook}).  We merely comment on one key point.  Suppose that $\phi''$ is bounded. (The general case can be reduced to such a setting by localization.)
Fix $t \in [0, S]$ and let $\Pi_n = \{0 = t_0^{(n)} < t_1^{(n)} < t_2^{(n)} \cdots < t_k^{(n)} = t\}$ be a partition of $[0,t]$ such that $|\Pi_n| \to 0$ as $n \to \infty$.  Then the only change to standard proofs is in the treatment
of the term
\be
\label{ab807}
\sum_{i=1}^{k} \phi''(\alpha_{i-1})\Delta_iW \Delta_iZ ,
\ee
where for a process $\zeta$, we write $\Delta_i \zeta = \zeta(t_{i} ) - \zeta( t_{i-1})$. 
One needs to argue that the expression in \eqref{ab807} approaches $0$ as $n \to \infty$, which follows on noting that
\begin{align*}
	\E \left [ \sum_{i=1}^{k} \phi''(\alpha_{i-1})\Delta_iW \Delta_iZ\right]^2 =& \E \left [ \sum_{i=1}^{k} \left(\phi''(\alpha_{i-1})\right)^2 (\Delta_iW)^2 (\Delta_iZ)^2\right]\\
	\le & \sup_x|\phi''(x)|^2 \E \left [ \sum_{i=1}^{k}  (\Delta_iW)^2 \E \xcmd{(\Delta_iZ)^2}{\clf_{t_{i-1}}^B\vee\clf_S^{W}}\right]\\
	= & \sup_x|\phi''(x)|^2C^k(0) \sum_{i=1}^{k} (t_i-t_{i-1})\E(\Delta_iW)^2\\
	=& \sup_x|\phi''(x)|^2C^k(0) \sum_{i=1}^{k} (t_i-t_{i-1})^2,
\end{align*}
where the first equality follows on noting that by a conditioning argument the cross-product terms do not contribute while the next to last equality follows from \eqref{ab824}.
\end{proof}
Finally, we give the proof of \eqref{ab850}.\\ 

\noindent {\bf Proof of \eqref{ab850}.}
Note that $\{\tilde Z(t)\}_{t\in [0, S]}$ is a martingale with respect to the filtration $\tilde \clg_t = \clf_{0,t}^{\tilde B} \vee \clf_S^{\tilde W}$, with quadratic 
variation given as $\langle Z\rangle_t = C^k(0) t$. Thus, by an application of It\^{o}'s formula, we have that
\begin{align}
E(S) - E(t) &= -\left [ \int_t^S E(r)d\tilde Z (r) - C^k (0) \int_t^S E(r)dr \right ]  \nonumber\\
& =  - E(T)\int_t^S \frac{E(r)}{E(T)} \downarrow d\tilde Z (r), \label{eq:1334}
\end{align}
where the second equality is a consequence of Lemma \ref{def: BackwardAdapted} on taking $\varphi(x) = \varphi_1(x) = \varphi_2(x) = e^{-x}$ and $\psi (t) = \exp\{\frac{1}{2}C^k(0)t\}$.
Also, recall from \eqref{eqn: Jdef}
that
\begin{equation}M(t) = M(S) - \int_t^S J(r) d\tilde W(r), \; 0 \le t \le S. \label{eq:1336}
\end{equation}
Let $\Pi_n = \{t = t_0^{(n)} < t_1^{(n)} < t_2^{(n)} \cdots < t_k^{(n)} = S\}$ be a partition of $[t,S]$ such that $|\Pi_n| \to 0$ as $n \to \infty$.
Then, suppressing $n$ in the notation
\begin{align*}
	U(t) - U(S) =&  -\sum_{i=1}^k (U(t_i) - U(t_{i-1})) \\
	=& -\sum_{i=1}^k (M(t_i)E(t_i) - M(t_{i-1})E(t_{i-1}))\\
	=& -\sum_{i=1}^k M(t_i)(E(t_i) -E(t_{i-1})) - \sum_{i=1}^k E(t_{i-1}) (M(t_i) - M(t_{i-1}))\\
	\end{align*}										
The equality in \eqref{ab850} now follows from \eqref{eq:1334} and \eqref{eq:1336} on taking limit as $n\to \infty$ in the last line.

\def\cprime{$'$} \def\cprime{$'$}


\begin{thebibliography}{10}

\bibitem{Corwin1+1}
Gideon Amir, Ivan Corwin, and Jeremy Quastel.
\newblock Probability distribution of the free energy of the continuum directed
  random polymer in {$1+1$} dimensions.
\newblock {\em Comm. Pure Appl. Math.}, 64(4):466--537, 2011.

\bibitem{BertiniCancrini}
Lorenzo Bertini and Nicoletta Cancrini.
\newblock The stochastic heat equation: {F}eynman-{K}ac formula and
  intermittence.
\newblock {\em J. Statist. Phys.}, 78(5-6):1377--1401, 1995.

\bibitem{BertiniGiacomin}
Lorenzo Bertini and Giambattista Giacomin.
\newblock Stochastic {B}urgers and {KPZ} equations from particle systems.
\newblock {\em Comm. Math. Phys.}, 183(3):571--607, 1997.

\bibitem{CorwinSurvey}
I.~Corwin.
\newblock The {K}ardar-{P}arisi-{Z}hang equation and universality class.
\newblock {\em Random Matrices: Theory and Applications}, 1, 2012.

\bibitem{TouziNumerical}
Arash Fahim, Nizar Touzi, and Xavier Warin.
\newblock A probabilistic numerical method for fully nonlinear parabolic
  {PDE}s.
\newblock {\em Ann. Appl. Probab.}, 21(4):1322--1364, 2011.

\bibitem{Jara}
Patr{\'{\i}}cia Gon{\c{c}}alves and Milton Jara.
\newblock Scaling limits of a tagged particle in the exclusion process with
  variable diffusion coefficient.
\newblock {\em J. Stat. Phys.}, 132(6):1135--1143, 2008.

\bibitem{Hairer}
Martin Hairer.
\newblock Solving the {KPZ} equation.
\newblock {\em Preprint}.

\bibitem{KaratzasBook}
Ioannis Karatzas and Steven~E. Shreve.
\newblock {\em Brownian motion and stochastic calculus}, volume 113 of {\em
  Graduate Texts in Mathematics}.
\newblock Springer-Verlag, New York, second edition, 1991.

\bibitem{KPZ}
Mehran Kardar, Giorgio Parisi, and Yi-Cheng Zhang.
\newblock Dynamic scaling of growing interfaces.
\newblock {\em Phys. Rev. Lett.}, 56:889--892, Mar 1986.

\bibitem{Kobylanski}
Magdalena Kobylanski.
\newblock Backward stochastic differential equations and partial differential
  equations with quadratic growth.
\newblock {\em Ann. Probab.}, 28(2):558--602, 2000.

\bibitem{MaInsurance}
Yuping Liu and Jin Ma.
\newblock Optimal reinsurance/investment problems for general insurance models.
\newblock {\em Ann. Appl. Probab.}, 19(4):1495--1528, 2009.

\bibitem{JinBook}
Jin Ma and Jiongmin Yong.
\newblock {\em Forward-backward stochastic differential equations and their
  applications}, volume 1702 of {\em Lecture Notes in Mathematics}.
\newblock Springer-Verlag, Berlin, 1999.

\bibitem{PengPardoux}
Etienne Pardoux and Shige Peng.
\newblock Backward doubly stochastic differential equations and systems of
  quasilinear {SPDE}s.
\newblock {\em Probability Theory and Related Fields}, 98:209--227, 1994.
\newblock 10.1007/BF01192514.

\bibitem{PowerPlants}
Arnaud Porchet, Nizar Touzi, and Xavier Warin.
\newblock Valuation of power plants by utility indifference and numerical
  computation.
\newblock {\em Math. Methods Oper. Res.}, 70(1):47--75, 2009.

\bibitem{Protter}
Philip~E. Protter.
\newblock {\em Stochastic integration and differential equations}, volume~21 of
  {\em Stochastic Modelling and Applied Probability}.
\newblock Springer-Verlag, Berlin, 2005.
\newblock Second edition. Version 2.1, Corrected third printing.

\bibitem{Spohn}
Tomohiro Sasamoto and Herbert Spohn.
\newblock One-dimensional {K}ardar-{P}arisi-{Z}hang equation: An exact solution
  and its universality.
\newblock {\em Phys. Rev. Lett.}, 104:230602, Jun 2010.

\bibitem{DynamicProgramming}
H.~Mete Soner and Nizar Touzi.
\newblock The dynamic programming equation for second order stochastic target
  problems.
\newblock {\em SIAM J. Control Optim.}, 48(4):2344--2365, 2009.

\bibitem{Tevzadze}
Revaz Tevzadze.
\newblock Solvability of backward stochastic differential equations with
  quadratic growth.
\newblock {\em Stochastic Processes and their Applications}, 118(3):503 -- 515,
  2008.

\bibitem{Nizar2BSDE}
Nizar Touzi.
\newblock Second order backward {SDE}s, fully nonlinear {PDE}s, and
  applications in finance.
\newblock In {\em Proceedings of the {I}nternational {C}ongress of
  {M}athematicians. {V}olume {IV}}, pages 3132--3150, New Delhi, 2010.
  Hindustan Book Agency.



\end{thebibliography}

\end{document}